\declaretheorem[name=Theorem, numberwithin=section]{theorem}
\newtheorem{lemma}[theorem]{Lemma}
\newtheorem{proposition}[theorem]{Proposition}
\declaretheoremstyle[bodyfont=\normalfont]{definition-style}
\declaretheoremstyle[bodyfont=\normalfont]{remark-style}
\declaretheorem[name=Remark, style=remark-style,sibling=theorem]{remark}
\numberwithin{equation}{section}
\theoremstyle{plain}
\newcommand{\N}{\mathds{N}}
\newcommand{\R}{\mathds{R}}
\newcommand{\C}{\mathds{C}}
\newcommand{\Z}{\mathds{Z}}
\newcommand{\cA}{\mathcal{A}}
\DeclareMathOperator{\pat}{path}
\keywords{Hardy inequality, fractional Hardy inequality, discrete Laplacian}
\title{Fractional and non-fractional Hardy inequality on~a~lattice~$\Z^d$}
\author[Dyda]{Bart{\l}omiej Dyda}
\address{Faculty of Pure and Applied Mathematics\\ Wroc{\l}aw University 
        of Science and Technology\\
        Wybrze\.ze Wyspia\'nskiego 27,
        50-370 Wroc{\l}aw, Poland
}
\email{bdyda@pwr.edu.pl\quad dyda@math.uni-bielefeld.de}
\subjclass{Primary 26D15; secondary 26A33, 35J05}
\begin{document}


\begin{abstract}
We present simple proofs of a~discrete fractional and non-fractional Hardy inequality,
Our constants are explicit, but not optimal.
In the class of power weights, we get a~complete picture of when the non-fractional Hardy inequality
holds, for any dimension $d$ of the lattice $\Z^d$ and exponent $0<p<\infty$.
\end{abstract}

\maketitle


\section{Introduction}

Around 1920, Hardy proved \cite{zbMATH02606186}, \cite[Theorem~326]{MR944909} an inequality which may be phrased as follows,
\[
\sum_{n=1}^\infty \frac{|u(n)|^p}{n^p} < \left(\frac{p}{p-1}\right)^p \sum_{n=0}^\infty |u(n+1) - u(n)|^p \quad (\textrm{for all sequences $(u(n))$ with $u(0)=0$}),
\]
unless all $u(n)=0$. This inequality has been generalised in many directions; in this note, we study the case where the summation
over $\Z_+=\{0,1,\ldots\}$ is replaced by a~summation over $\Z_+^d$ or~$\Z^d$.
The right side of such an inequality is (for $p=2$) closely connected with the quadratic form
of the graph Laplacian on the lattice $\Z^d$.

Let us first state one of our two main theorems.
In this theorem, the symbol $k\sim j$ means that $k$ and $j$ are \emph{neighbours}, see Section~\ref{sec:proofs} for a~precise definition.

\begin{theorem}\label{thm:classH}
  \begin{enumerate}
  \item
    If $0<p\leq 1<d$, then there exists a~constant $c=c(p)<\infty$ such that
    \begin{equation}\label{eq:thmlapl-p-small}
    \sum_{j\in \Z_+^d \setminus\{0\}} \frac{|u(j)|^p}{\|j\|_\infty}
    \leq c d^{-1}
    \sum_{j\in \Z_+^d \setminus\{0\}} \sum_{\substack{k\in \Z_+^d\setminus\{0\} \\ k\sim j}} |u(j)-u(k)|^p
    \end{equation}
    for all functions $u:\Z_+^d \to \C$ for which the left side of \eqref{eq:thmlapl-p-small} is finite.
  \item
    If $1\leq p <d$ and $d-p\geq \delta>0$, then there exists a~constant $c=c(p,\delta)<\infty$ such that
    \begin{equation}\label{eq:thmlapl-p-medium}
    \sum_{j\in \Z_+^d \setminus\{0\}} \frac{|u(j)|^p}{\|j\|_\infty^{p}}
    \leq c d^{p-2}
    \sum_{j\in \Z_+^d \setminus\{0\}} \sum_{\substack{k\in \Z_+^d\setminus\{0\} \\ k\sim j}} |u(j)-u(k)|^p
    \end{equation}
    for all functions $u:\Z_+^d \to \C$ for which the left side of \eqref{eq:thmlapl-p-medium} is finite.
  \item\label{item:dleqp}
    If $d<p$, then there exists a~constant $c=c(d,p)<\infty$ such that
    \begin{equation}\label{eq:thmlapl-p-large}
    \sum_{j\in \Z_+^d \setminus\{0\}} \frac{|u(j)|^p}{\|j\|_\infty^{p}}
    \leq c
    \sum_{j\in \Z_+^d\setminus\{0\} } \sum_{\substack{k\in \Z_+^d \\ k\sim j}} |u(j)-u(k)|^p
    \end{equation}
    for all functions $u:\Z_+^d \to \C$ such that $u(0)=0$.
  \item\label{item:dp}
    If $d=p$ and $\varepsilon>0$, then there exists a~constant $c=c(d,p,\varepsilon)<\infty$ such that
    \begin{equation}\label{eq:thmlapl-p-d}
    \sum_{j\in \Z_+^d \setminus\{0\}} \frac{|u(j)|^p}{\|j\|_\infty^{p + \varepsilon}}
    \leq c
    \sum_{j\in \Z_+^d \setminus\{0\}}  \sum_{\substack{k\in \Z_+^d \\ k\sim j}} \frac{|u(j)-u(k)|^p}{\|j\|_\infty^{\varepsilon}}
    \leq c
    \sum_{j\in \Z_+^d\setminus\{0\} } \sum_{\substack{k\in \Z_+^d \\ k\sim j}} |u(j)-u(k)|^p
    \end{equation}
    for all functions $u:\Z_+^d \to \C$ such that $u(0)=0$.
  \item\label{item:deq1_ple1}
    If $d=1$, $0<p<1$ and $\varepsilon>0$, then there exists a~constant $c=c(p,\varepsilon)<\infty$ such that
    \begin{equation}\label{eq:thmlapl-p-le-d-eq-1}
    \sum_{j\in \Z_+ \setminus\{0\}} \frac{|u(j)|^p}{\|j\|_\infty^{1 + \varepsilon}}
    \leq c
    \sum_{j\in \Z_+ } \sum_{\substack{k\in \Z_+ \\ k\sim j}} |u(j)-u(k)|^p
    \end{equation}
    for all functions $u:\Z_+ \to \C$ such that $u(0)=0$.
  \item\label{item:optimal}
    Exponents $1$, $p$, $p$ at $\|j\|_\infty$ on left sides of  \eqref{eq:thmlapl-p-small},
    \eqref{eq:thmlapl-p-medium}, \eqref{eq:thmlapl-p-large} (respectively) are optimal,
    that is, they cannot be replaced by smaller exponents.
    Furthermore, (\ref{item:dp}) and (\ref{item:deq1_ple1}) fail to hold for $\varepsilon=0$.
    Finally, the exponent $-1$ at $d$ on right side of \eqref{eq:thmlapl-p-small} is optimal,
    i.e., cannot be replaced by a~smaller one.
\item\label{item:Z}
    All six results above hold also for $\Z^d$ in place of $\Z_+^d$ (perhaps with bigger constants).
  \end{enumerate}
\end{theorem}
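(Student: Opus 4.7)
My plan is to reduce each inequality to the classical one-dimensional discrete Hardy inequality applied along a coordinate axis, and to extract the claimed dimension-dependent constants via a careful sector decomposition. Optimality in (\ref{item:optimal}) is then handled by explicit test functions, and the passage to $\Z^d$ in (\ref{item:Z}) by reflecting orthants.

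\textbf{One-dimensional base case and sector decomposition.} I would first record the 1D discrete Hardy inequality on $\Z_+$ with explicit constants, in the versions needed below: the classical $\sum_{n\geq 1}n^{-p}|u(n)|^p\lesssim\sum_{n\geq 0}|u(n+1)-u(n)|^p$ for $1\leq p<\infty$, its $p<1$ analogue (obtained by telescoping together with subadditivity of $x\mapsto x^p$ in place of Hölder), and the $\varepsilon$-shifted critical versions used in items (\ref{item:dp}), (\ref{item:deq1_ple1}). Then, for each $j\in\Z_+^d\setminus\{0\}$, let $i(j)$ be the smallest index with $j_{i(j)}=\|j\|_\infty$; this partitions $\Z_+^d\setminus\{0\}$ into $d$ sectors $A_1,\ldots,A_d$, on each of which $j_i$ equals the $\ell_\infty$-radius. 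Within $A_i$ I would telescope $u$ along the $i$-th direction — from $+\infty$ inwards in the subcritical items \eqref{eq:thmlapl-p-small}, \eqref{eq:thmlapl-p-medium} (where finiteness of the LHS forces axial decay of $u$), and from the origin outwards in the supercritical items (\ref{item:dleqp})--(\ref{item:deq1_ple1}) (using $u(0)=0$).

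\textbf{Assembling the inequality, and critical cases.} Applying the 1D Hardy inequality to each slice and summing over $i$ produces a bound by axial differences only, which is dominated by the symmetric double sum on the RHS of the target inequality. The dimension-dependent prefactors arise from two sources: (a) the dimensional counting $|A_i\cap\{\|j\|_\infty=m\}|\sim m^{d-1}/d$, which pulls out a factor $1/d$ in the $p\leq 1$ regime of \eqref{eq:thmlapl-p-small}; and (b) an additional Hölder step for $p\geq 1$, contributing a $d^{p-1}$ factor that yields $d^{p-2}$ in \eqref{eq:thmlapl-p-medium}, \eqref{eq:thmlapl-p-large}. The critical items (\ref{item:dp}), (\ref{item:deq1_ple1}) are handled by applying the $\varepsilon$-shifted 1D Hardy inequality slicewise; the second inequality in \eqref{eq:thmlapl-p-d} is the trivial bound $\|j\|_\infty^{-\varepsilon}\leq 1$.

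\textbf{Optimality and the passage to $\Z^d$.} For the sharpness of the exponents $1$, $p$, $p$ in \eqref{eq:thmlapl-p-small}--\eqref{eq:thmlapl-p-large} I would test with functions of the form $u_N(j)=(\|j\|_\infty\wedge N)^{\alpha}$ for suitable $\alpha$ (or with indicators of $\ell_\infty$-balls) and compare polynomial growth on both sides; the failure at $\varepsilon=0$ in (\ref{item:dp}), (\ref{item:deq1_ple1}) follows from the classical truncated-logarithm counterexample. Sharpness of the $d^{-1}$ factor in \eqref{eq:thmlapl-p-small} is obtained by concentrating $u$ near a single axis point $Ne_1$, where the LHS is of order $N^{-1}$ but the RHS has only $O(1)$ non-vanishing gradient terms. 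For (\ref{item:Z}), I would decompose $\Z^d$ into its $2^d$ closed orthants and apply the $\Z_+^d$-result on each after reflection, losing at most a factor $2^d$ absorbed into the dimension-dependent constant.

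\textbf{Main obstacle.} The subtlest point is extracting the precise $d$-dependence in \eqref{eq:thmlapl-p-small}, \eqref{eq:thmlapl-p-medium}: competing reductions (for instance, averaging over all monotone lattice paths from $0$ to $j$, or discretizing a continuous Hardy inequality on $\R^d$) tend to give constants with the wrong $d$-growth. The sector decomposition combined with the 1D base case is what allows the correct scaling; everything else, including the critical-case $\varepsilon$-shift, the test-function constructions for optimality, and the orthant reflection, is then fairly routine.
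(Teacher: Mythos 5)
Your central reduction --- slicing $\Z_+^d\setminus\{0\}$ into sectors $A_i$ on which $j_i=\|j\|_\infty$ and applying the one-dimensional Hardy inequality along the $i$-th coordinate in each slice --- does not work, and the failure is structural. Fix a line $L=\{j:\ j_{i'}=a_{i'}\ (i'\neq i)\}$ with some $a_{i'}\neq 0$. On $L$ there is no boundary condition (the hypothesis $u(0)=0$, when present, reaches only the $d$ lines through the origin), and finiteness of the full left-hand side does not force decay of $u$ along $L$: for $1<p<d$ the function $u=1_{\{j_2=\cdots=j_d=0\}}$ has $\sum_{j\neq 0}\|j\|_\infty^{-p}|u(j)|^p=\sum_{n\geq 1}n^{-p}<\infty$ yet is constant on the first axis, so all axial differences there vanish while the weighted sum over that line is positive. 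Hence the slicewise inequality is simply false, and the theorem is rescued only by the \emph{transverse} edges --- which a scheme that ``produces a bound by axial differences only'' never sees. The situation is even worse for item (1): there the weight exponent is $1$ with $p\leq 1$, and the one-dimensional inequality $\sum_n n^{-1}|u(n)|^p\leq C\sum_n|u(n+1)-u(n)|^p$ fails for \emph{every} $p\leq 1$ even for compactly supported $u$ with $u(0)=0$ (take $u=1_{[1,N]}$: the left side is $\sim\log N$, the right side is bounded --- this is precisely why item (5) needs $\varepsilon>0$). So the exponent $1$ in \eqref{eq:thmlapl-p-small} is a genuinely multidimensional effect that cannot be recovered line by line, whatever sector decomposition you choose. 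As written, your argument proves none of items (1)--(4).

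The paper avoids slicing altogether: Lemma~\ref{lem:main} compares $\sum_{j\in A_n}|u(j)|^p$ over a dyadic annulus with the \emph{average} of $u$ over a much larger annulus $A_{n+K}$ and absorbs the error term $\sum_{m\in A_{n+K}}|u(m)|^p\|m\|_\infty^{-sp}$ using $2^{K(sp-d)}\leq\frac12$ when $sp<d$ (this absorption is where the subcritical weight is really won); Proposition~\ref{prop:frac-class} then converts the long-range differences $|u(j)-u(m)|$, $j\in A_n$, $m\in A_{n+K}$, into nearest-neighbour differences by chaining along coordinate-ordered lattice paths and counting how many pairs $(j,m)$ use a given edge, with the factors $d^{-1}$ and $d^{p-2}$ coming from averaging that count over the $d$ cyclic orderings of the coordinates. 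If you want to keep a one-dimensional flavour, the telescoping must be done radially between annuli, with the absorption step, not along coordinate lines. Your last paragraph (test functions for optimality, orthant reflection for $\Z^d$) is essentially the paper's argument and is sound, except for one detail: a point mass at $Ne_1$ produces $\Theta(d)$ non-vanishing edge terms, not $O(1)$; the conclusion $d^{-\varepsilon}\gtrsim N^{-1}$ for all $d$ still yields the desired contradiction for fixed $N$.
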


\begin{remark}
  We note that in \eqref{eq:thmlapl-p-small} and \eqref{eq:thmlapl-p-medium} the value  $u(0)$ does not appear, still
  we formulate the result for functions defined on $\Z_+^d$ and not on $\Z_+^d \setminus\{0\}$ for more symmetry.
\end{remark}

One dimensional case ($d=1$ in (\ref{item:dleqp})) of Theorem~\ref{thm:classH}
is well-known and goes back to Hardy, while (\ref{item:deq1_ple1}) is rather trivial and included
only for completeness.
The ultimate results here are \cite{MR3779222} (for $p=2$) and  \cite{MR4648598} (for general $p>1$),
see also \cite{zbMATH07900724},
where the optimal weight (with optimal constant) in \eqref{eq:thmlapl-p-large} is obtained.
Weighted optimal one-dimensional discrete Hardy inequalities are studied in \cite{zbMATH07545074, zbMATH07809011}. On the other hand, multidimensional discrete Hardy inequalities, for $p=2$ and
functions vanishing on all coordinate axis, are studied in \cite{zbMATH07341408}.

We note that the case of $p=2$ and $d\geq 3$ of Theorem~\ref{thm:classH} is proved
in \cite{MR3584186}, while in case $p=d=2$, a~logarithmic Hardy inequality stronger than our
case (\ref{item:dp}) is obtained.
These results are refined in \cite{MR4630769, MR4412975, zbMATH07642047} to obtain asymptotics of the constants when $d\to \infty$.

 There are also papers \cite{MR4597627} and \cite{MR4768506}, see especially Example~6.4 in the latter article,
which concerns the case $1<p<d$. There the Hardy inequality with the optimal weight is obtained,
however, the weight is expressed by the Green function and is not explicit, in particular,
it is not clear whether it can be bounded by a~power function.

Consequently, the cases (1) and (3) of Theorem~\ref{thm:classH} seem to be completely new. Case (2) is probably new for $p\neq 2$ and $d>1$, at least with this form of the weight.

The next theorem is a~fractional counterpart of Theorem~\ref{thm:classH}. Similarly as before,
we also obtain power weights for every $0<p<\infty$, $d\in\N$ and $s>0$, however here we do not prove
optimality of exponents.

\begin{theorem}\label{thm:fracH}
  Let $0 < s,p < \infty$ and $sp\neq d$.
  \begin{enumerate}
  \item
    If $sp<d$ and $d-sp\geq \delta>0$, then there exists a~constant $c=c(s,p,\delta)<\infty$ such that
    \begin{equation}\label{eq:thmfrac-sp-small}
    \sum_{j\in \Z_+^d \setminus\{0\}} \frac{|u(j)|^p}{\|j\|_\infty^{sp}}
    \leq c
    \sum_{j\in \Z_+^d \setminus\{0\}} \sum_{\substack{m\in \Z_+^d \setminus\{0\}\\ m\neq j}} \frac{|u(j)-u(m)|^p}{\|j-m\|_\infty^{sp+d}}
    \end{equation}
    for all functions $u:\Z_+^d \to \C$ for which the left side of \eqref{eq:thmfrac-sp-small} is finite.
  \item
    If $sp=d$ and $\varepsilon>0$, then there exists a~constant $c=c(d,s,p,\varepsilon)<\infty$ such that
    \begin{equation}\label{eq:thmfrac-sp-d}
    \sum_{j\in \Z_+^d \setminus\{0\}} \frac{|u(j)|^p}{\|j\|_\infty^{sp+\varepsilon}}
    \leq c
    \sum_{j\in \Z_+^d } \sum_{\substack{m\in \Z_+^d\\ m\neq j}} \frac{|u(j)-u(m)|^p}{\|j-m\|_\infty^{sp+d+\varepsilon}}
    \leq c
    \sum_{j\in \Z_+^d } \sum_{\substack{m\in \Z_+^d\\ m\neq j}} \frac{|u(j)-u(m)|^p}{\|j-m\|_\infty^{sp+d}}
    \end{equation}
    for all functions $u:\Z_+^d \to \C$ such that $u(0)=0$.
  \item
    If $sp>d$, then there exists a~constant $c=c(d,s,p)<\infty$ such that
    \begin{equation}\label{eq:thmfrac-sp-large}
    \sum_{j\in \Z_+^d \setminus\{0\}} \frac{|u(j)|^p}{\|j\|_\infty^{sp}}
    \leq c
    \sum_{j\in \Z_+^d } \sum_{\substack{m\in \Z_+^d\\ m\neq j}} \frac{|u(j)-u(m)|^p}{\|j-m\|_\infty^{sp+d}}
    \end{equation}
    for all functions $u:\Z_+^d \to \C$ such that $u(0)=0$.
  \item
    All three above results hold also for $\Z^d$ in place of $\Z_+^d$ (perhaps with bigger constants).
  \end{enumerate}
\end{theorem}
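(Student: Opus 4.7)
The plan is a dyadic-shell iteration. For $k\ge 0$ let $A_k := \{j\in \Z_+^d : 2^k \le \|j\|_\infty < 2^{k+1}\}$, and set
\[
L_k := 2^{-k\,sp}\sum_{j\in A_k}|u(j)|^p,\qquad
D_{k,\ell} := \sum_{j\in A_k}\sum_{\substack{m\in A_\ell\\ m\neq j}}\frac{|u(j)-u(m)|^p}{\|j-m\|_\infty^{sp+d}},
\]
so that the left-hand side of the claimed inequality is comparable to $\sum_k L_k$ and the right-hand side dominates $\sum_{k,\ell}D_{k,\ell}$. Two preliminary reductions simplify matters. Case~(2) follows from case~(3) via the substitution $s' := s+\varepsilon/p$: then $s'p = d+\varepsilon > d$, the first ``$\le$'' in \eqref{eq:thmfrac-sp-d} is exactly \eqref{eq:thmfrac-sp-large} for $s'$, and the second ``$\le$'' is automatic since $\|j-m\|_\infty\ge 1$ for $j\neq m$. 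Case~(4) follows by partitioning $\Z^d$ into its $2^d$ closed orthants, applying the $\Z_+^d$ inequality to the restriction of $u$ to each orthant (reflected), and summing; the resulting right-hand sides only count same-orthant pairs and are thus dominated by the double sum on $\Z^d\times\Z^d$. Hence only cases~(1) and~(3) on $\Z_+^d$ need be treated.

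The core of the argument is a single linear recursion for $L_k$, proved by a shell averaging. Fix an integer $N\ge 2$. For $j\in A_k$ and $m\in A_{k+N}$, the elementary inequality $|u(j)|^p \le C_p\bigl(|u(j)-u(m)|^p + |u(m)|^p\bigr)$ (with $C_p=1$ for $0<p\le 1$ and $C_p=2^{p-1}$ for $p\ge 1$) is averaged over $m\in A_{k+N}$ and summed over $j\in A_k$. Using $\|j-m\|_\infty \asymp 2^{k+N}$ and $|A_{k+N}|\asymp 2^{(k+N)d}$ yields
\[
L_k \le C\cdot 2^{N\,sp}\,D_{k,k+N}\;+\;C\cdot 2^{N(sp-d)}\,L_{k+N};
\]
the symmetric estimate, obtained by averaging over $m\in A_{k-N}$ (for $k\ge N$), reads
\[
L_k \le C\cdot 2^{Nd}\,D_{k-N,k}\;+\;C\cdot 2^{N(d-sp)}\,L_{k-N}.
\]

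\textbf{Case~(3), $sp>d$.} Pick $N$ so that $\beta := C\,2^{N(d-sp)} < 1$. The boundary condition $u(0)=0$ makes $L_0 = \sum_{\|j\|_\infty=1}|u(j)-u(0)|^p$ itself a piece of the fractional seminorm (pairs $(j,0)$ with $\|j\|_\infty^{sp+d}=1$), and analogously the finitely many base values $L_k$ for $0\le k<N$ are bounded by a constant times the part of the seminorm coming from pairs $(j,0)$. Iterating the second recursion downward gives $L_k \le C'\sum_{i\ge 0}\beta^i D_{k-(i+1)N,\,k-iN}$ plus a geometrically decaying base contribution; summing in $k$, swapping the order of summation, and using $\sum_i\beta^i=(1-\beta)^{-1}$ yields $\sum_k L_k \le C''\sum_{k'}D_{k'-N,k'} + C''\sum_{k'}D_{k',\{0\}}$, which is bounded by the fractional seminorm.

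\textbf{Case~(1), $sp<d$.} Pick $N$ so that $\alpha := C\,2^{N(sp-d)} < 1$. The hypothesis that $\sum_k L_k$ is finite gives $L_k\to 0$, so the first recursion can be iterated upward: $L_k \le C'\sum_{i\ge 0}\alpha^i D_{k+iN,\,k+(i+1)N}$. Summing in $k$, exchanging the order, and using $\sum_i\alpha^i=(1-\alpha)^{-1}$ gives $\sum_k L_k \le C''\sum_{k'}D_{k',k'+N}$, again bounded by the fractional seminorm. The hardest part is precisely this case~(1): no zero-anchor is available, so $L_k\to 0$ must be extracted from the finiteness of the left-hand side alone (routine from $\sum_kL_k<\infty$, but essential for legitimising the upward iteration), and $N$ must be chosen strictly large enough to achieve the geometric contraction. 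Everything else amounts to bookkeeping of the shell-averaging calculation and a standard interchange of geometric series, in keeping with the paper's promise of simple proofs.
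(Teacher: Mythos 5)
Your proposal is essentially the paper's own argument: the same dyadic shells, the same averaging of $|u(j)|^p\le C_p(|u(j)-u(m)|^p+|u(m)|^p)$ over a shell a fixed number $N$ of dyadic steps away (outward when $sp<d$, inward when $sp>d$), the same choice of $N$ to force a geometric contraction, the same anchoring of the finitely many innermost shells to the pairs $(j,0)$ via $u(0)=0$ when $sp>d$, and the same reductions of case (2) to case (3) via $s'=s+\varepsilon/p$ and of case (4) to the orthant decomposition. (The paper phrases the contraction as summing over all shells and absorbing $\tfrac12\sum_kL_k$ into the left side rather than iterating the recursion, but that is cosmetic; both variants use the finiteness hypothesis in case (1) in the same way.)

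The one point where you fall short of the stated result is case (4) applied to (1): there the constant is asserted to be $c(s,p,\delta)$, independent of $d$, and your decomposition of $\Z^d$ into $2^d$ overlapping closed orthants produces an extra factor $2^d$. The paper avoids this by using the \emph{pairwise disjoint} translated orthants $Z_\eta=\bigtimes_i Z_{\eta_i}$ with $Z_0=\{0,-1,-2,\dots\}$, $Z_1=\{1,2,\dots\}$, $\eta\in\{0,1\}^d$: disjointness means the $2^d$ right-hand sides sum to at most one copy of the full seminorm over $\Z^d$, so no dimensional factor appears, and the leftover points of $\{0,1\}^d\setminus\{0\}$ are handled by adding the original $\Z_+^d$ inequality \eqref{eq:thmfrac-sp-small} to both sides. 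If one reads the parenthetical ``perhaps with bigger constants'' as licensing $d$-dependence this is immaterial, but the author explicitly rejects the $2^d$ route for case (1), so you should either adopt the disjoint decomposition or state that your constant for (1) on $\Z^d$ degrades with $d$.
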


\begin{remark}
  We emphasize that in above theorem we do not assume that $s<1$.
\end{remark}

Inequality (1) of the above Theorem, in the case $\Z$ (i.e., $d=1$), $p=2$ and $0<s<\frac{1}{2}$, was proved in \cite{MR3882021}.
There it is also proved that the right side of \eqref{eq:thmfrac-sp-small} is comparable with the quadratic form
corresponding to the fractional discrete Laplace operator on~$\Z$ with exponent~$s$.
We stress the fact that in the discrete setting, the kernel of fractional Laplace operator is only comparable with $|j-m|^{-d-2s}$,
see also \cite{zbMATH06893246, zbMATH07194174, zbMATH06856747}.
The optimal constant (again, in the case $d=1$, $p=2$ and $0<s<\frac{1}{2}$) was obtained in \cite{zbMATH07710867}.
The multidimensional part of Theorem~\ref{thm:fracH} seems to be new.

An interesting feature of our proof is that we conclude  parts (1)--(5) of the first Theorem,
as well as the second Theorem, from a~common result,
Lemma~\ref{lem:main}. In a~sense, this lemma is a~primary form of the Hardy inequality,
as both local and non-local type Hardy inequalities can be obtained from it.

\section{Notation and lemmas}\label{sec:proofs}
For $x\in \C^d$ we put,
\begin{align*}
  \|x\|_p &= \left(\sum_{k=1}^d |x_k|^p\right)^{1/p}, \quad 0 < p < \infty;\\
  \|x\|_\infty &= \max_{k=1,\ldots,d} |x_k|;\\
  |x| &= \|x\|_2.
\end{align*}
We say that $x,y\in \Z^d$ are \emph{neighbours}, denoted by $x\sim y$, if $|x-y|=1$.

Let
\[
A_n = \begin{cases}
  \{0\}, & \textrm{for $n=0$,}\\
  \Z_+^d \cap [0, 2^n-1]^d \setminus [0, 2^{n-1}-1]^d, & \textrm{for $n=1,2,\ldots$.}
\end{cases}
\]
For $a,b\in \R$ we denote $a\wedge b=\min(a,b)$ and $a\vee b=\max(a,b)$.

The next lemma is an important technical result, which allows us to deduce most parts of both main
theorems.
\begin{lemma}\label{lem:main}
  Let $0<s,p<\infty$ with $sp\neq d$. Let $K\in \N$ be such that
  \begin{equation}\label{eq:lemgamma}
  2^{sp+1}(2^{p-1}\vee 1) 2^{-K|sp-d|} \leq 1,
  \end{equation}
  and let
\begin{equation}\label{eq:const-lemma}
C(d,p,s,K)=\frac{2^{sp+1+K(d\wedge sp)} (2^{p-1}\vee 1)}{1-2^{-d}}.
\end{equation}
  
  If $sp<d$, then
\begin{equation}\label{eq:lemH-small-sp}
  \sum_{j\in \Z_+^d\setminus\{0\}} \frac{|u(j)|^p}{\|j\|_\infty^{sp}} \leq C(d,p,s,K)
  \sum_{n=1}^\infty \sum_{j\in A_n} \sum_{m\in A_{n+K}} \frac{|u(j)-u(m)|^p}{2^{(n+K)(d+sp)}}
\end{equation}
for all functions $u:\Z_+^d\to\C$ such that the left side of \eqref{eq:lemH-small-sp} is finite.

If $d<sp$, then
    \begin{equation}\label{eq:lemH-large-sp}
      \sum_{\substack{j\in \Z_+^d\\ 2^{K} \leq \|j\|_\infty}} \frac{|u(j)|^p}{\|j\|_\infty^{sp}}
    \leq
    C(d,p,s,K) \sum_{n=1}^\infty \sum_{j\in A_n} \sum_{m\in A_{n+K}}
    \frac{|u(j)-u(m)|^p}{2^{(n+K)(d+sp)}}
    + \sum_{\substack{j\in \Z_+^d\\ 1 \leq \|j\|_\infty < 2^{K}}} \frac{|u(j)|^p}{\|j\|_\infty^{sp}}
    \end{equation}
    for all functions $u:\Z_+^d\to\C$.
\end{lemma}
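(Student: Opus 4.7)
My approach is a dyadic averaging on the annuli $A_n$ producing a one-step recursion for $S_n := \sum_{j \in A_n}|u(j)|^p/\|j\|_\infty^{sp}$, collapsed by a geometric-series absorption enforced by \eqref{eq:lemgamma}.

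Since $2^{n-1} \leq \|j\|_\infty < 2^n$ on $A_n$, one has $2^{-nsp}\sum_{j\in A_n}|u(j)|^p \leq S_n \leq 2^{sp-nsp}\sum_{j\in A_n}|u(j)|^p$. The elementary inequality $|a+b|^p \leq (2^{p-1}\vee 1)(|a|^p + |b|^p)$, valid for all $p>0$, applied with $a=u(j)-u(m)$ and $b=u(m)$, averaged over $m \in A_{n'}$ for a neighbouring index $n'$, and summed over $j \in A_n$, yields via $|A_n| = (1-2^{-d})2^{nd}$ and $|A_{n'}|^{-1} = 2^{-n'd}/(1-2^{-d})$ the recursion $S_n \leq B(n,n') + \gamma\, S_{n'}$, where
\[
B(n,n') = \frac{2^{sp}(2^{p-1}\vee 1)}{1-2^{-d}}\, 2^{-nsp}\, 2^{-n'd}\sum_{j\in A_n,\,m\in A_{n'}}|u(j)-u(m)|^p,
\]
and $\gamma = 2^{sp}(2^{p-1}\vee 1)\,2^{(n-n')(d-sp)}$. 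Choosing $|n'-n|=K$ with $n'>n$ for $sp<d$ (outward step) or $n'<n$ for $sp>d$ (inward step), $\gamma$ becomes $2^{sp}(2^{p-1}\vee 1)\,2^{-K|sp-d|}$, which is at most $1/2$ by hypothesis \eqref{eq:lemgamma}.

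For $sp<d$ I take $n'=n+K$. Summing over $n\geq 1$ and using $\sum_{n\geq 1}S_{n+K}\leq\sum_{n\geq 1}S_n$, the finiteness hypothesis on the left side of \eqref{eq:lemH-small-sp} allows me to absorb, giving $(1-\gamma)\sum S_n\leq\sum_n B(n,n+K)$. Rewriting $2^{-nsp}2^{-(n+K)d} = 2^{Ksp}\cdot 2^{-(n+K)(sp+d)}$ recovers the weight required on the right of \eqref{eq:lemH-small-sp}, and the accumulated prefactor $\frac{2^{sp+1}(2^{p-1}\vee 1)\,2^{Ksp}}{1-2^{-d}}$ equals $C(d,p,s,K)$ (here $sp\wedge d=sp$). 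For $sp>d$ I take $n'=n-K$, which requires $n\geq K+1$. For each $N\geq K+1$, summing the recursion over $K+1\leq n\leq N$ and splitting $\sum_{n=K+1}^N S_{n-K}\leq\sum_{m=1}^K S_m+\sum_{n=K+1}^N S_n$ gives, after absorption and monotone convergence in $N$,
\[
\sum_{n\geq K+1}S_n \leq \tfrac{1}{1-\gamma}\sum_{n\geq K+1}B(n,n-K)+\tfrac{\gamma}{1-\gamma}\sum_{m=1}^K S_m.
\]
Relabeling $n\mapsto n+K$ on the double sums (using the symmetry of $|u(j)-u(m)|^p$ in $j,m$) aligns them with $(A_n,A_{n+K})$ in \eqref{eq:lemH-large-sp}; the boundary sum $\sum_{m=1}^K S_m$ is exactly the last term in \eqref{eq:lemH-large-sp}; and $\gamma/(1-\gamma)\leq 1$ absorbs its coefficient, with the main prefactor $\frac{2^{sp+1}(2^{p-1}\vee 1)\,2^{Kd}}{1-2^{-d}}=C(d,p,s,K)$ (now $sp\wedge d=d$).

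The principal hurdle is bookkeeping: verifying that the averaging prefactor $(2^{p-1}\vee 1)$, the annulus volume $(1-2^{-d})^{-1}$, and the $K$-dependent shift $2^{K(sp\wedge d)}$ combine into exactly the constant $C(d,p,s,K)$ of \eqref{eq:const-lemma}. Conceptually the lemma reduces to a single dyadic averaging step plus a geometric-series absorption; the two cases differ only in whether that step is taken outward or inward.
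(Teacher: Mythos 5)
Your proposal is correct and follows essentially the same route as the paper: the same dyadic annuli $A_n$, the same averaging of $|u(j)|^p\le(2^{p-1}\vee 1)(|u(j)-u(m)|^p+|u(m)|^p)$ over a $K$-shifted annulus (outward for $sp<d$, inward for $sp>d$), and the same geometric absorption with $\gamma\le 1/2$ guaranteed by \eqref{eq:lemgamma}, including the finite-$N$ truncation before absorbing in the $sp>d$ case. The constant bookkeeping ($2^{Ksp}$ versus $2^{Kd}$ matching $K(d\wedge sp)$) also checks out.
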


\begin{remark}
We note that the value of $u$ at $0\in \Z_+^d$ appears neither in \eqref{eq:lemH-small-sp} nor in \eqref{eq:lemH-large-sp}.
\end{remark}

\begin{proof}
  We observe that for $n=1,2,\ldots$,
\[
\#A_n = 2^{nd}(1-2^{-d}); \qquad    2^{n-1} \leq \|j\|_\infty \leq 2^n-1, \quad \textrm{for $j\in A_n$.}
\]
 We choose $k=K$ if $sp<d$ and $k=-K$ otherwise.
Let  $n\geq 1$ be such that $n+k>0$.
    Then
    \begin{align}
      \sum_{j\in A_n} \frac{|u(j)|^p}{\|j\|_\infty^{sp}}
      &\leq
      2^{(1-n)sp} \sum_{j\in A_n} |u(j)|^p \label{eq:sumAn}\\
      &\leq
      \frac{2^{(1-n)sp} (2^{p-1}\vee 1)}{\#A_{n+k}} \sum_{m\in A_{n+k}} \sum_{j\in A_n}
      \left( |u(j)-u(m)|^p + |u(m)|^p \right)\nonumber\\
      &\leq
      \frac{2^{(1-n)sp} (2^{p-1}\vee 1)}{2^{(n+k)d}(1-2^{-d})} \sum_{m\in A_{n+k}} \sum_{j\in A_n}
      |u(j)-u(m)|^p \nonumber\\
      &\phantom{\leq}+
      \frac{2^{(1-n)sp} (2^{p-1}\vee 1) 2^{nd}(1-2^{-d})}{2^{(n+k)d}(1-2^{-d})} \sum_{m\in A_{n+k}} 
      \frac{|u(m)|^p}{\|m\|_\infty^{sp}} 2^{(n+k)sp}\nonumber\\
      &=
      \frac{2^{(k+1)sp} (2^{p-1}\vee 1)}{1-2^{-d}} \sum_{m\in A_{n+k}} \sum_{j\in A_n}
      \frac{|u(j)-u(m)|^p}{2^{(n+k)(d+sp)}} \nonumber\\
      &\phantom{\leq}+
      2^{sp} (2^{p-1}\vee 1)
      \cdot 2^{k(sp-d)}
      \sum_{m\in A_{n+k}}
      \frac{|u(m)|^p}{\|m\|_\infty^{sp}}. \nonumber
    \end{align}
    Note that by \eqref{eq:lemgamma} and the definition of $k$ it holds    
    \[
     2^{sp} (2^{p-1}\vee 1)
    \cdot 2^{k(sp-d)} \leq \frac{1}{2}.
    \]
    We consider two cases.
    \subparagraph{\bf Case $sp<d$.}
    In this case, $k\geq 1$.
    We sum \eqref{eq:sumAn} over $n\geq 1$ and obtain
    \[
    \sum_{j\in \Z_+^d\setminus\{0\}} \frac{|u(j)|^p}{\|j\|_\infty^{sp}}
    \leq
    \frac{C(d,p,s,K)}{2} \sum_{n=1}^\infty \sum_{m\in A_{n+k}} \sum_{j\in A_n}
    \frac{|u(j)-u(m)|^p}{2^{(n+k)(d+sp)}}
    + \frac{1}{2} \sum_{j\in \Z_+^d\setminus\{0\}} \frac{|u(j)|^p}{\|j\|_\infty^{sp}},
    \]
    and inequality \eqref{eq:lemH-small-sp} follows.

    \subparagraph{\bf Case $sp>d$.}
    In this case, $k\leq -1$. We sum \eqref{eq:sumAn} over $-k < n \leq N$ and obtain,
    \begin{equation}\label{lem:proof3sums}
      \sum_{\substack{j\in \Z_+^d\\ 2^{-k} \leq \|j\|_\infty < 2^N}} \frac{|u(j)|^p}{\|j\|_\infty^{sp}}
    \leq
    \frac{c(d,p,s,k)}{2} \sum_{n=-k+1}^N \sum_{m\in A_{n+k}} \sum_{j\in A_n}
    \frac{|u(j)-u(m)|^p}{2^{(n+k)(d+sp)}}
    + \frac{1}{2} \sum_{\substack{j\in \Z_+^d\\ 1 \leq \|j\|_\infty < 2^{N+k}}} \frac{|u(j)|^p}{\|j\|_\infty^{sp}},
    \end{equation}
    where
    \[
    c(d,p,s,k)=\frac{2\cdot 2^{(k+1)sp} (2^{p-1}\vee 1)}{1-2^{-d}}.
    \]
    All sums in \eqref{lem:proof3sums} are finite, therefore
    \[
      \sum_{\substack{j\in \Z_+^d\\ 2^{-k} \leq \|j\|_\infty < 2^N}} \frac{|u(j)|^p}{\|j\|_\infty^{sp}}
    \leq
    c(d,p,s,k) \sum_{n=-k+1}^N \sum_{m\in A_{n+k}} \sum_{j\in A_n}
    \frac{|u(j)-u(m)|^p}{2^{(n+k)(d+sp)}}
    + \sum_{\substack{j\in \Z_+^d\\ 1 \leq \|j\|_\infty < 2^{-k}}} \frac{|u(j)|^p}{\|j\|_\infty^{sp}}.
    \]
    Letting $N\to \infty$ and rearraging the triple sum above gives
    \begin{align*}
      \sum_{\substack{j\in \Z_+^d\\ 2^{-k} \leq \|j\|_\infty}} &\frac{|u(j)|^p}{\|j\|_\infty^{sp}}
    \leq
    c(d,p,s,k) \sum_{n=1}^\infty \sum_{m\in A_n} \sum_{j\in A_{n+|k|}}
    \frac{|u(j)-u(m)|^p}{2^{n(d+sp)}}
    + \sum_{\substack{j\in \Z_+^d\\ 1 \leq \|j\|_\infty < 2^{-k}}} \frac{|u(j)|^p}{\|j\|_\infty^{sp}}\\
    &=
    c(d,p,s,k) 2^{|k|(d+sp)} \sum_{n=1}^\infty \sum_{j\in A_n} \sum_{m\in A_{n+|k|}}
    \frac{|u(j)-u(m)|^p}{2^{(n+|k|)(d+sp)}}
    + \sum_{\substack{j\in \Z_+^d\\ 1 \leq \|j\|_\infty < 2^{-k}}} \frac{|u(j)|^p}{\|j\|_\infty^{sp}}.
    \end{align*}
    Recalling that $k=-K$ we see that inequality \eqref{eq:lemH-large-sp} holds.
\end{proof}

Next result will allow us to deduce local  Hardy inequality from Lemma~\ref{lem:main}.

\begin{proposition}\label{prop:frac-class}
Let $k\in \N$. Then for every function $u:\Z_+^d\to \C$ it holds,
\begin{equation}\label{eq:prop-frac-class}
  \sum_{n=1}^\infty \sum_{j\in A_n} \sum_{m\in A_{n+k}} \frac{|u(j)-u(m)|^p}{2^{(n+k)(d+sp)}}
  \leq 
  C_L(k,s,p)\;   d^{(p\vee 1)-2} \!\!
  \sum_{x \in \Z_+^d\setminus\{0\}}
  \sum_{\substack{y \in \Z_+^d\setminus\{0\} \\ y\sim x}}  \frac{|u(x)-u(y)|^p}{\|x\|_\infty^{sp-(p \vee 1)}}\;,
 \end{equation}
where $C_L(k,s,p)$ is given by \eqref{eq:cksp}.
\end{proposition}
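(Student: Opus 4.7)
The plan is to dominate the long-range differences $|u(j) - u(m)|^p$ on the left-hand side by nearest-neighbor differences along carefully chosen lattice paths from $j$ to $m$, then to count the multiplicity with which each edge is used. The pointwise tool is Jensen's inequality (for $p \geq 1$) or the subadditivity $(a + b)^p \leq a^p + b^p$ (for $0 < p < 1$), which in either case yields
\[
|u(j) - u(m)|^p \leq L^{(p-1) \vee 0} \sum_{(x, y) \in \gamma_{j, m}} |u(x) - u(y)|^p ,
\]
where $L$ is the length of the chosen nearest-neighbor path $\gamma_{j, m}$ from $j$ to $m$.

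For $(j, m) \in A_n \times A_{n+k}$ I would take $\gamma_{j, m}$ of length $L = O(d \cdot 2^{n+k})$, designed so that every intermediate vertex $x$ has $\|x\|_\infty$ comparable to $2^{n+k}$. A natural construction proceeds in two phases: first raise a suitable coordinate of $j$ (for instance, one realizing $\|j\|_\infty$) to reach a point in $A_{n+k}$, then move inside $A_{n+k}$ to $m$ by a canonical coordinate-at-a-time path whose coordinate order is randomized by a uniform permutation $\sigma \in S_d$. Summing over $(j, m) \in A_n \times A_{n+k}$, swapping the order of summation, and counting: for each edge $(x, y)$ in direction $\ell$, the expected number of pairs whose path uses $(x, y)$ is bounded by a constant times $2^{nd + n + kd}/d$, with the factor $1/d$ coming from averaging the position of direction $\ell$ uniformly in $\sigma$. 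Combined with $L^{(p-1) \vee 0} \leq (2 d \cdot 2^{n+k})^{(p-1)\vee 0}$ and the weight $2^{-(n+k)(d+sp)}$, the powers of $2^{n+k}$ collapse to $(p \vee 1) - sp$, matching $\|x\|_\infty^{-(sp - (p \vee 1))}$ since $\|x\|_\infty \sim 2^{n+k}$ on every edge used; the powers of $d$ aggregate to $d^{(p \vee 1) - 2}$. Finally, summing over $n \geq 1$ is harmless: because $\|x\|_\infty \sim 2^{n+k}$ on every edge, each fixed edge appears in only a bounded range of $n$-values, producing a finite geometric series that is absorbed into $C_L(k, s, p)$.

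\emph{The main obstacle} is the design of the path together with the edge count. A naive canonical path (one coordinate at a time starting from $j$) can pass through points of $\ell^\infty$-norm much smaller than $2^{n+k}$; this inflates the multiplicity of small-norm edges and makes the sum over $n$ diverge when $sp < p \vee 1$. One must keep every intermediate vertex of $\gamma_{j, m}$ in the outer annulus while retaining enough symmetry for the permutation averaging to still yield the $1/d$ saving uniformly over the direction $\ell$ and the edge location; verifying these combinatorial bounds is the main technical step.
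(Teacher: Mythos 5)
Your strategy is the same as the paper's: chain each long-range difference along a nearest-neighbour lattice path, apply $|u(j)-u(m)|^p\le L^{(p-1)\vee 0}\sum_t|u(x_t)-u(x_{t+1})|^p$, count the multiplicity with which a fixed edge is used by pairs $(j,m)\in A_n\times A_{n+k}$, and gain the extra factor $d^{-1}$ by averaging over the order in which the coordinate directions are traversed (the paper uses the $d$ cyclic shifts of the canonical path rather than all of $S_d$, but that is immaterial). The problem is that the proposal stops exactly at the point where the proof lives. The multiplicity bound is not "a constant times $2^{nd+n+kd}/d$ by averaging": for an edge whose direction is traversed in position $q$, the count is at most $2^{qn}2^{(d-q+1)(n+k)}=2^{(d+1)n}2^{k(d-q+1)}$ (the free coordinates of $j$ are those already traversed, the free coordinates of $m$ those not yet traversed), and the $1/d$ saving comes only because this bound decays \emph{geometrically} in $q$, so that summing over the $d$ orderings costs a factor $2$ while the left-hand side gains a factor $d$; uniform averaging of a $q$-independent bound would give no saving at all. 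You explicitly defer "verifying these combinatorial bounds" as "the main technical step," so what you have is a plan, not a proof. Moreover, the proposition demands the specific constant $C_L(k,s,p)$ of \eqref{eq:cksp}, which requires carrying out the three-case geometric sum over the levels $t$ with $(n-k)\vee 1\le t\le n$, not just invoking "a finite geometric series absorbed into the constant."

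On the point you flag as the main obstacle: your concern that a coordinate-at-a-time path can dip to small $\ell^\infty$-norm is a legitimate one (it matters precisely when $sp<p\vee 1$, where a low-norm edge charged by unboundedly many levels $n$ would produce a divergent coefficient), and the paper's proof deals with it by keeping all intermediate vertices in $\cA_n^{n+k}=\bigcup_{t=n}^{n+k}A_t$, which confines each edge to at most $k+1$ levels. But your proposed remedy does not obviously achieve this either: after the first phase has lifted one coordinate of $j$ into the range of $A_{n+k}$, a canonical coordinate-at-a-time traversal inside $A_{n+k}$ from, say, $(2^{n+k-1},0,\ldots,0)$ to $(0,2^{n+k-1},0,\ldots,0)$ still passes near the origin unless the coordinate order is chosen adaptively (raise the large coordinate of $m$ before lowering the large coordinate of the current point), and an adaptive order is in tension with both the uniform-permutation averaging and the clean product structure behind the multiplicity count \eqref{eq:pairsofpoints}. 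Until you specify the path concretely, prove it stays in the outer annulus, and establish the $q$-dependent edge count for it, the argument has a genuine gap at its centre.
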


\begin{proof}
  For $j\in A_n$ and $m\in A_{n+k}$, we denote by $\pat(j,m)$ the sequence of points,
  \begin{equation}\label{eq:path}
  \pat(j,m) = (x_0, x_1, \ldots, x_N)
  \end{equation}
  with the following properties:
  \begin{enumerate}
  \item $x_0=j$ and $x_N=m$;
  \item $|x_t - x_{t+1}| = 1$ for every $t=0,1,\ldots, N-1$;
  \item for each $q=1,\ldots,d$,
    the set of differences $\{x_0-x_1, \ldots, x_{N-1}-x_N\}$ does not contain $\{e_q, -e_q\}$;
  \item if $x_t-x_{t+1} \in \{e_q, -e_q\}$ for some $q$, then
    $\{x_0-x_1, \ldots, x_t-x_{t+1}\} \subset \{ \pm e_1, \pm e_2, \ldots, \pm e_q\}$
    and
    $\{x_t-x_{t+1}, \ldots, x_{N-1}-x_N\} \subset \{ \pm e_q, \pm e_{q+1}, \ldots, \pm e_d \}$.
  \end{enumerate}
  In other words, $\pat(j,m)$ is the sequence of points connecting $j$ with $m$ in such a~way
  that we first go along the first coordinate axis, then along the second, and so on, and the length of our route is the shortest possible.
  For example, in $\Z_+^3$,
  \[
  \pat((1,4,7), (2,4,5)) = ( (1,4,7), (2,4,7), (2,4,6), (2,4,5)).
  \]
  We denote by $|\pat(j,m)|$ the number $N$ in \eqref{eq:path}, and by $\pat(j,m)_t$ its $t$th element, starting from zero; so
  \[
  \pat(j,m) = (\pat(j,m)_0, \pat(j,m)_1, \ldots, \pat(j,m)_{N})
  \qquad \textrm{with $N=|\pat(j,m)|$.}
  \]
  Clearly
  \[
  |\pat(j,m)| = \sum_{q=1}^d |j_q - m_q| \leq 2^{n+k} d.
  \]
  Therefore
  \begin{align}
    &\sum_{j\in A_n} \sum_{m\in A_{n+k}} |u(j)-u(m)|^p \nonumber\\
    &\leq
    ((2^{n+k} d)^{p-1} \vee 1) \sum_{j\in A_n} \sum_{m\in A_{n+k}}
    \sum_{t=0}^{|\pat(j,m)|-1} \left|u\Big(\pat(j,m)_t\Big) - u\Big(\pat(j,m)_{t+1}\Big)\right|^p.
    \label{eq:sumoverpath}
  \end{align}
  We will rearrange the above sum. To this end, let us fix some pair of points
  $x$, $y \in \cA_n^{n+k} := \bigcup_{t=n}^{n+k} A_t$  with $|x-y|=1$.
  Suppose that for some $j\in A_n$ and $m\in A_{n+k}$, the path from $j$ to $m$
  contains both $x$ and $y$. Let $x-y = \pm e_q$, that is, suppose
  that $x$ and $y$ differ on $q$th coordinate.
  Then, by the definition of the path, the first $q-1$ coordinates of $x$ are the same as of $m$
  (because from $x$ to $y$ we move along $q$th axis, hence we must have reached our destination $m$ at the coordinates
  smaller than $q$), and the last $d-q$ coordinates are the same as of $j$ (because we have not yet started to move along those coordinates axes); that is,
  \[
  x = (m_1, \ldots, m_{q-1}, x_q, j_{q+1},\ldots, j_d).
  \]
  Therefore, there are at most $2^{qn}$ points $j\in A_n$ and at most $2^{(d-q+1)(n+k)}$ points $m\in A_{n+k}$
  such that $x\in \pat(j,m)$, and such that $x-y=\pm e_q$.
  Consequently, for a~given coordinate index~$q$,
  there are at most
  \begin{equation}\label{eq:pairsofpoints}
    2^{(d+1)n}2^{k(d-q+1)} \leq 2^{(d+1)n}2^{kd}
  \end{equation}
    pairs of points $(j,m)$ such that
    $x\in \pat(j,m)$ and the next point in~the path is $x\pm e_q$.
    On the other hand, for any $j\in A_n$, $m\in A_{n+k}$ and $t=0,1,\ldots,|\pat(j,m)|-1$,
    it holds $\pat(j,m)_t \in \cA_n^{n+k}$.
  Hence
  \begin{align*}
    &\sum_{j\in A_n} \sum_{m\in A_{n+k}} |u(m)-u(j)|^p\\
    &\leq ((2^{n+k} d)^{p-1} \vee 1) \cdot
    2^{(d+1)n} 2^{kd}
    \sum_{x\in \cA_n^{n+k}}\; \sum_{\substack{y\in \cA_n^{n+k}\\y\sim x}}|u(x)-u(y)|^p 
  \end{align*}
By summing over $n$ and rearranging we obtain,
 \begin{align}
   \sum_{n=1}^\infty \sum_{j\in A_n} \sum_{m\in A_{n+k}} &\frac{|u(m)-u(j)|^p}{2^{(n+k)(d+sp)}}
   \leq
   \sum_{n=1}^\infty
   \frac{(2^{n+k} d)^{(p\vee 1)-1} \cdot 2^{(d+1)n}2^{kd}}{2^{(n+k)(d+sp)}}
    \sum_{x\in \cA_n^{n+k}}\; \sum_{\substack{y\in \cA_n^{n+k} \\ y\sim x}}|u(x)-u(y)|^p \nonumber\\
   &=
   \frac{(2^kd)^{(p\vee 1)-1}}{2^{ksp}}
   \sum_{n=1}^\infty
    \sum_{x\in A_n}\; \sum_{\substack{y\in \Z_+^d\setminus\{0\} \\y \sim x}}|u(x)-u(y)|^p 
   \sum_{t=(n-k)\vee 1}^n  2^{t((p\vee 1)-sp)}.  \label{eq:proofAn}
 \end{align}
 We estimate the last sum. When $sp = p\vee 1$, then
 \[
\sum_{t=(n-k)\vee 1}^n  2^{t((p\vee 1)-sp)} \leq k+1.
 \]
 If $sp < p\vee 1$ and $x\in A_n$, then
 \[
 \sum_{t=(n-k)\vee 1}^n  2^{t((p\vee 1)-sp)}
 \leq
 \sum_{t=-\infty}^n  2^{t((p\vee 1)-sp)} = \frac{2^{n((p\vee 1)-sp)}}{1 - 2^{sp - (p\vee 1)}}
 \leq
  \frac{2^{(p\vee 1)-sp}}{1 - 2^{sp - (p\vee 1)}} \|x\|_\infty^{(p\vee 1)-sp}.
 \]
Finally, if $sp > p\vee 1$ and $x\in A_n$, then
 \[
 \sum_{t=(n-k)\vee 1}^n  2^{t((p\vee 1)-sp)}
 \leq
 \sum_{t=n-k}^\infty  2^{t((p\vee 1)-sp)} = \frac{2^{(n-k)((p\vee 1) -sp)}}{1 - 2^{(p\vee 1) -sp}}
 \leq
  \frac{2^{-k(p\vee 1)+ksp}}{1 - 2^{(p\vee 1) - sp}} \|x\|_\infty^{(p\vee 1)-sp}.
 \]
 Coming back to \eqref{eq:proofAn}, we arrive at
 \[
\sum_{n=1}^\infty \sum_{j\in A_n} \sum_{m\in A_{n+k}} \frac{|u(m)-u(j)|^p}{2^{(n+k)(d+sp)}}
\leq
\frac{C_L(k,s,p)}{2}  d^{(p\vee 1)-1}
\sum_{x \in \Z_+^d\setminus\{0\}}
\; \sum_{\substack{y\in \Z_+^d\setminus\{0\} \\y \sim x}}
\frac{|u(x)-u(y)|^p}{\|x\|_\infty^{sp-(p \vee 1)}}
 \]
  with
 \begin{equation}\label{eq:cksp}
 \frac{C_L(k,s,p)}{2} =  2^{k((p\vee 1)-sp-1)} \cdot \begin{cases}
   \frac{2^{(p\vee 1)-sp}}{1 - 2^{sp - (p\vee 1)}}  & \textrm{if $sp < p\vee 1$;}\\
   (k+1)  & \textrm{if $sp = p\vee 1$;}\\
    \frac{2^{-k(p\vee 1)+ksp}}{1 - 2^{(p\vee 1) - sp}}  & \textrm{if $sp > p\vee 1$.}
   \end{cases} 
 \end{equation}
Thus we have obtained \eqref{eq:prop-frac-class}, however with $\frac{1}{2} d^{(p\vee 1)-1}$
 instead of $d^{(p\vee 1)-2}$.
 Observe that the estimate \eqref{eq:pairsofpoints}
 is inefficient, unless $q=1$. To improve it, we will consider $d$ paths, where we
 start moving first along one of the $d$ coordinates, instead of always the first.
 More specifically, let $\sigma$ denote the shift operator,
 \[
 \sigma(x_1,x_2,\ldots,x_d) = (x_2,\ldots, x_d, x_1),
 \]
 and let
 \[
 \pat^\beta(j,m) = \left( \sigma^{-\beta}(\pat_k(\sigma^\beta(j), \sigma^\beta(m)))
 \right)_{k=0,\ldots,|\pat(j,m)|}.
 \]
 In particular, $\pat^0(j,m)=\pat(j,m)$, while $\pat^\beta(j,m)$ is a~similarly defined path,
 where we first move along $(\beta+1)$th axis, and then subsequent ones (in a~circular way).
 Then we can write \eqref{eq:sumoverpath} for each of the paths and sum over $\beta$, obtaining
  \begin{align}
    &d \sum_{j\in A_n} \sum_{m\in A_{n+k}} |u(j)-u(m)|^p \label{eq:sumwithd}\\
    &\leq
    ((2^{n+k} d)^{p-1} \vee 1) \sum_{j\in A_n} \sum_{m\in A_{n+k}}
    \sum_{\beta=0}^{d-1}
    \sum_{t=0}^{|\pat^\beta(j,m)|-1} \left|u\Big(\pat^\beta(j,m)_t\Big) - u\Big(\pat^\beta(j,m)_{t+1}\Big)\right|^p. \nonumber
  \end{align}
  Then, when rearraging the above sum like before, we will obtain the following number
  of pairs of points
  \begin{equation}\label{eq:pairsofpoints2}
  \sum_{\beta=0}^{d-1} 2^{(d+1)n}2^{k(d-\beta)} \leq 2^{(d+1)n}2^{kd} \frac{1}{1-2^{-k}}
  \leq 2 \cdot  2^{(d+1)n}2^{kd}.
  \end{equation}
  instead of \eqref{eq:pairsofpoints}. We proceed as before; the difference is that we have
  an additional factor $d$ on left side of \eqref{eq:sumwithd}, and an additional factor $2$
  on right side of \eqref{eq:pairsofpoints2}. This yields the desired constant.
\end{proof}

\section{Lower bounds}
In this section, we provide some bounds for two sequences of functions, which will allow
us to prove optimality of exponents in main theorems.

\begin{proposition}\label{prop:un}
  Let $u_n:\Z_+^d\to \R$ be defined as
  \[
  u_n(x) =\begin{cases}
  1 & \textrm{if $\|x\|_\infty \leq n$;}\\
  0 & \textrm{otherwise.}
  \end{cases}
  \]
  Then for $t>0$ and $0<p<\infty$,
  \begin{align}
    \sum_{j\in \Z_+^d\setminus\{0\}} \frac{|u_n(j)|^p}{\|j\|_\infty^t} &\geq
     \begin{cases}
      \frac{d}{d-t} n^{d-t}, & \textrm{if $d-t\geq 1$,}\\
      \frac{d}{|d-t|} |(n+1)^{d-t}-1|, & \textrm{if $d-t<1$ and $d-t\neq 0$,}\\
      d \ln(n+1), &  \textrm{if $d-t = 0$;}
      \end{cases} \label{eq:u1LHS}\\
    \sum_{x\in  \Z_+^d} \; \sum_{\substack{y\in \Z_+^d\\ y\sim x}}|u(x)-u(y)|^p 
    &\leq
     \begin{cases}
       2dn^{d-1} + 2^{d+1} d n^{d-2} & \textrm{if $d\geq 2$,}\\
       2dn^{d-1} & \textrm{if $d=1$.} 
       \end{cases} \label{eq:u1RHS}
  \end{align}
\end{proposition}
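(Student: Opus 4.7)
\emph{Plan.} Because $u_n$ takes only values $0$ and $1$, we have $|u_n(j)|^p = u_n(j)$ and $|u_n(x)-u_n(y)|^p \in \{0,1\}$, so both sides reduce to combinatorial quantities independent of $p$. This makes the proof purely a matter of counting lattice points and edges.

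For \eqref{eq:u1LHS}, I would decompose the sum according to the level sets $\{\|j\|_\infty = k\}$ for $k=1,\ldots,n$, each of cardinality $(k+1)^d - k^d \geq d k^{d-1}$; the lower bound comes from the mean value theorem applied to $x\mapsto x^d$ together with the monotonicity of $x^{d-1}$ for $d\geq 1$. This reduces matters to a lower bound on $d\sum_{k=1}^n k^{d-1-t}$, which I would obtain by comparison with an integral. When $d-t \geq 1$ the summand $k^{d-1-t}$ is non-decreasing in $k$, so $\sum_{k=1}^n k^{d-1-t} \geq \int_0^n x^{d-1-t}\,dx = n^{d-t}/(d-t)$; when $d-t<1$ the summand is non-increasing (or equals $1/k$ in the borderline case $d=t$), and $\sum_{k=1}^n k^{d-1-t} \geq \int_1^{n+1} x^{d-1-t}\,dx$, which evaluates to $|(n+1)^{d-t}-1|/|d-t|$ or to $\ln(n+1)$, respectively. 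Multiplying through by $d$ recovers the three stated cases.

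For \eqref{eq:u1RHS}, the double sum equals twice the number of unordered neighbour edges straddling the boundary of $B_n := \{z\in\Z_+^d : \|z\|_\infty \leq n\}$. I would parameterise each such edge uniquely as $\{x, x+e_q\}$ with $x \in B_n$, noting that \emph{decreasing} a coordinate of a point in $B_n$ cannot push it outside $B_n$, so the ``outer'' endpoint must be of the form $x+e_q$ for some coordinate direction $q$. The condition $\|x+e_q\|_\infty > n$ then forces $x_q = n$, while the remaining coordinates of $x$ range freely over $\{0,1,\ldots,n\}$, yielding exactly $(n+1)^{d-1}$ boundary edges per direction and $2d(n+1)^{d-1}$ in total. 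For $d=1$ this is $2 = 2dn^{d-1}$. For $d\geq 2$, a binomial expansion gives $(n+1)^{d-1} - n^{d-1} = \sum_{k=1}^{d-1}\binom{d-1}{k} n^{d-1-k} \leq (2^{d-1}-1)n^{d-2}$ whenever $n\geq 1$, and multiplying by $2d$ produces $2d(n+1)^{d-1} \leq 2dn^{d-1} + 2^d d\,n^{d-2}$, which is even tighter than the stated bound.

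The argument is essentially bookkeeping. The only subtlety I expect is the clean parameterisation of boundary edges for \eqref{eq:u1RHS}, together with the careful case split between $d-t \geq 1$, $d-t < 1$ with $d\neq t$, and $d=t$ in \eqref{eq:u1LHS}; once these are set up, the estimates follow from elementary integral and binomial inequalities.
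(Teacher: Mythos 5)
Your proof is correct. For \eqref{eq:u1LHS} it coincides with the paper's argument: the same level-set decomposition, the same bound $(k+1)^d-k^d\geq dk^{d-1}$, and the same integral comparison with the same three-way case split ($\int_0^n$ when the summand is non-decreasing, $\int_1^{n+1}$ otherwise). For \eqref{eq:u1RHS} you take a slightly different route: the paper counts \emph{vertices} on the sphere $S_n=\{x:\|x\|_\infty=n\}$, splitting off the set $W_n$ of points with exactly one maximal coordinate (each contributing one straddling edge) from $S_n\setminus W_n$ (each contributing at most $d$), and then bounds $\#(S_n\setminus W_n)\leq 2^d n^{d-2}$; you instead count \emph{edges} directly, parameterising each straddling edge as $\{x,x+e_q\}$ with $x_q=n$ and the other coordinates free in $\{0,\dots,n\}$. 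Your observation that decreasing a coordinate cannot leave $B_n$ (valid precisely because we are in $\Z_+^d$) makes this parameterisation exact, so you get the exact count $2d(n+1)^{d-1}$ and hence a marginally sharper constant ($2^d$ in place of $2^{d+1}$) after the binomial expansion; both versions imply the stated bound. The two counts are of course the same quantity organised differently, but yours avoids the inequality $\sum_{y\sim x}|u(x)-u(y)|^p\leq d$ on $S_n\setminus W_n$ and is, if anything, cleaner.
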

\begin{proof}
  We have,
  \begin{align*}
    \sum_{j\in \Z_+^d\setminus\{0\}} \frac{|u_n(j)|^p}{\|j\|_\infty^t}
    &=
    \sum_{j=1}^n \frac{(j+1)^d - j^d}{j^t}
    \geq \sum_{j=1}^n \frac{dj^{d-1}}{j^t} = d \sum_{j=1}^n j^{d-1-t}.
  \end{align*}
  If $d-1-t\geq 0$, then we can estimate
  \[
  d \sum_{j=1}^n j^{d-1-t}  \geq d \int_0^n j^{d-1-t}\,dj =  \frac{d}{d-t} n^{d-t}.
  \]
  In the remaining case $d-1-t < 0$, we estimate
  \[
  d \sum_{j=1}^n j^{d-1-t}  \geq d \int_1^{n+1} j^{d-1-t}\,dj
  \]
  and obtain the desired estimate \eqref{eq:u1LHS}.

  Let
  \begin{equation}\label{eq:SkWk}
   \begin{split}
   S_k &:= \{ x\in \Z_+^d: \|x\|_\infty = k \},\\
   W_k &:= \{ x\in S_k: \textrm{exactly one of $x_j$'s equals $k$} \}.
   \end{split}
  \end{equation}
  We observe that
  \begin{equation}\label{eq:cardSkWk}
   \begin{split}
   \# W_k &= dk^{d-1},\\
   \# (S_k\setminus W_k) &= (k+1)^d - k^d - dk^{d-1}
   =
   \begin{cases}
     \sum_{i=0}^{d-2} \binom{d}{i} k^i \leq 2^d k^{d-2} & \textrm{if $d\geq 2$,}\\
     0 & \textrm{if $d=1$.}
     \end{cases}
   \end{split}
  \end{equation}
  
  To estimate the sum \eqref{eq:u1RHS}, we first observe that $|u(x)-u(y)|\neq 0$ only when $x\in S_n$ and $y \in S_{n+1}$ or vice versa.  For $x\in W_n$, it holds $\sum_{\substack{y\in \Z_+^d\\ y\sim x}}|u(x)-u(y)|^p =1$,
   while for $x\in S_n\setminus W_n$ it holds $\sum_{\substack{y\in \Z_+^d\\ y\sim x}}|u(x)-u(y)|^p \leq d$.
   Therefore,
   \begin{align*}
     \sum_{x\in  \Z_+^d} \; \sum_{\substack{y\in \Z_+^d\\ y\sim x}}|u(x)-u(y)|^p
     &\leq
     2 \sum_{x\in  S_n} \; \sum_{\substack{y\in S_{n+1} \\ y\sim x}}|u(x)-u(y)|^p \\
     &\leq 2\# W_n + 2d\cdot\, \#(S_n\setminus W_n)
   \end{align*}
   which together with \eqref{eq:cardSkWk} gives \eqref{eq:u1RHS}, as desired.
\end{proof}

\begin{proposition}\label{prop:vn}
  Let $v_n:\Z_+^d\to \R$ be defined as
  \[
  v_n(x) =\left( 1 - \frac{\|x\|_\infty}{n}\right)_+.
  \]
  Then for $0<p<\infty$,
  \begin{align}
    \sum_{j\in \Z_+^d\setminus\{0\}} \frac{|v_n(j)|^p}{\|j\|_\infty^t} &\geq
    dn^{d-t}
    \left( B(p+1,d-t) - \frac{1}{(d-t) \wedge 1} \frac{1}{n^{(d-t)\wedge 1}} \right), \quad \textrm{when $0<t<d$,} \label{eq:Hvn} \\
    \sum_{j\in \Z_+^d\setminus\{0\}} \frac{|1 - v_n(j)|^p}{\|j\|_\infty^t} &\geq
    \begin{cases}
      n^{d-t},&\quad \textrm{when $t>d$,}\\
      \infty,&\quad \textrm{when $t=d$,}
    \end{cases} \label{eq:H1vn} \\
    \sum_{x\in  \Z_+^d}  \sum_{\substack{y\in \Z_+^d\\ y\sim x}}|u(x)-u(y)|^p 
    &\leq
    2n^{d-p} + 
    \begin{cases}
       2^{d+1} n^{d-1-p} & \textrm{if $d\geq 2$,}\\
       0 & \textrm{if $d=1$.}
     \end{cases}
     \label{eq:Dvn}
  \end{align}
\end{proposition}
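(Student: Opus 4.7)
All three estimates follow from the shell decomposition $\Z_+^d \setminus \{0\} = \bigsqcup_{k\geq 1} S_k$ of \eqref{eq:SkWk}, combined with the counts $\#S_k = (k+1)^d - k^d \geq dk^{d-1}$ and $\#W_k = dk^{d-1}$ of \eqref{eq:cardSkWk}. The plan is to rewrite each sum as a one-variable sum in $k$ and then compare it with an explicit integral.

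For \eqref{eq:Hvn} I will express the left-hand side as $\sum_{k=1}^{n-1}(1-k/n)^p \#S_k/k^t$, apply $\#S_k \geq dk^{d-1}$, and then substitute $y = k/n$ to obtain an $n$-point Riemann sum for $f(y) = (1-y)^p y^{d-1-t}$, whose integral over $(0,1)$ equals $B(p+1, d-t)$. The hard part will be producing the explicit error: $f$ is \emph{not} monotone on $(0,1)$ when $d-1-t \geq 0$ (i.e.\ $d-t \geq 1$), peaking at $y^\ast = (d-1-t)/(p+d-1-t)$. The idea is to split the sum at $k^\ast = \lfloor ny^\ast\rfloor$, use $f(k/n)\geq n\int_{(k-1)/n}^{k/n}f$ on the increasing half and $f(k/n)\geq n\int_{k/n}^{(k+1)/n}f$ on the decreasing half, and absorb the single omitted subinterval into $\|f\|_\infty \leq 1$, giving error of order $1/n$. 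In the remaining regime $0 < d-t < 1$, the function $f$ is decreasing on $(0,1)$ but blows up at $0$, and the omitted subinterval $[0,1/n]$ contributes at most $\int_0^{1/n} y^{d-1-t}\,dy = 1/((d-t)n^{d-t})$. The two cases combine into the form $1/((d-t)\wedge 1) \cdot n^{-(d-t)\wedge 1}$ appearing in \eqref{eq:Hvn}.

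For \eqref{eq:H1vn}, since $|1-v_n(j)|^p = 1$ whenever $\|j\|_\infty \geq n$, the sum is bounded below by $\sum_{k\geq n}\#S_k/k^t \geq d\sum_{k\geq n}k^{d-1-t}$. For $t = d$ this is a divergent harmonic-type series, yielding $\infty$. For $t > d$ one obtains a lower bound of order $n^{d-t}$ by combining this outer tail with the inner contribution $\sum_{\|j\|_\infty \leq n}(\|j\|_\infty/n)^p/\|j\|_\infty^t$ and comparing to the corresponding continuous integral.

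For \eqref{eq:Dvn}, the essential observation is that for neighbours $x \sim y$ the norms $\|x\|_\infty$ and $\|y\|_\infty$ differ by at most $1$, so $v_n(x) - v_n(y) \in \{0, \pm 1/n\}$. A pair $(x,y)$ with $x \in S_k$, $y \in S_{k+1}$, $x \sim y$ exists \emph{if and only if} $y \in W_{k+1}$, in which case the unique such $x$ is $y - e_i$ (with $i$ the only coordinate of $y$ equal to $k+1$). Consequently
\[
\sum_{x\in\Z_+^d}\sum_{y\sim x}|v_n(x) - v_n(y)|^p = \frac{2}{n^p}\sum_{k=0}^{n-1}\#W_{k+1} = \frac{2d}{n^p}\sum_{k=1}^n k^{d-1},
\]
after which $d\sum_{k=1}^n k^{d-1} \leq (n+1)^d \leq n^d + 2^d n^{d-1}$ (the cross term vanishing for $d = 1$) gives \eqref{eq:Dvn}.
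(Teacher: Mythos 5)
Your proposal is correct and follows essentially the same route as the paper: the shell decomposition into $S_k$ and $W_k$ with the counts \eqref{eq:cardSkWk}, a Riemann-sum comparison with the Beta integral for \eqref{eq:Hvn} (discarding the one subinterval containing the maximum of $(1-x)^p x^{d-1-t}$ when $d-t\geq 1$, and the interval $[0,\frac1n]$ when the function is decreasing), the tail estimate for \eqref{eq:H1vn}, and an edge count between consecutive shells for \eqref{eq:Dvn}. The only real difference is cosmetic: you count edges from the outer shell, where each $y\in W_{k+1}$ has exactly one neighbour in $S_k$, obtaining the exact identity $\frac{2d}{n^p}\sum_{k=1}^{n}k^{d-1}$, whereas the paper counts from the inner shell and bounds the up to $d$ outward neighbours of points of $S_k\setminus W_k$ crudely; both give \eqref{eq:Dvn}.
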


\begin{proof}
  We estimate
  \begin{align*}
    \sum_{j\in \Z_+^d\setminus\{0\}} \frac{|v_n(j)|^p}{\|j\|_\infty^t}
    &\geq
    \sum_{j=1}^n \frac{(1-\frac{j}{n})^p}{j^t} dj^{d-1}
    = dn^{d-t} \sum_{j=1}^{n-1} \frac{1}{n} \left(1-\frac{j}{n}\right)^p \left(\frac{j}{n}\right)^{d-1-t}
    =: dn^{d-t} S.
  \end{align*}
  We consider two cases. If $-1<d-1-t\leq 0$,
  then the function $(0,1]\ni x\mapsto (1-x)^px^{d-1-t}$
  is decreasing, hence
  \[
  S \geq \int_{\frac{1}{n}}^1 (1-x)^p x^{d-1-t}\,dx \geq B(p+1,d-t) - \frac{n^{t-d}}{d-t}.
  \]
  On the other hand, if $d-1-t> 0$, then the function $[0,1]\ni x\mapsto (1-x)^px^{d-1-t}$
  is first increasing, then decreasing. Assuming that the maximum of this function lies in some
  interval $[\frac{k}{n}, \frac{k+1}{n}]$, we may estimate
  \[
  S \geq \int_{[0,\frac{k}{n}]\cup [\frac{k+1}{n}]}  (1-x)^p x^{d-1-t}\,dx
  \geq B(p+1,d-t) - \frac{1}{n}.
  \]

  To prove \eqref{eq:H1vn}, we estimate,
  \begin{align*}
    \sum_{j\in \Z_+^d\setminus\{0\}} \frac{|1-v_n(j)|^p}{\|j\|_\infty^t}
    &\geq
    \sum_{j=n}^\infty \frac{(j+1)^d - j^d}{j^t}
    \geq
    \sum_{j=n}^\infty dj^{d-1-t} \\
    &\geq d \int_n^\infty j^{d-1-t}\,dj
    \geq
    \begin{cases}
      n^{d-t},&\quad \textrm{if $t>d$,}\\
      \infty,&\quad \textrm{if $t=d$.}
    \end{cases}
  \end{align*}
  
  To prove \eqref{eq:Dvn}, we use the notation \eqref{eq:SkWk} and observe that
\[
|u(x)-u(y)|^p =
\begin{cases}
  \frac{1}{n^p} & \textrm{if $x\in S_k$, $y\in S_{k+1}$ or vice versa, with $0\leq k < n$;}\\
  0 & \textrm{otherwise.}
\end{cases}
\]
  Therefore, using \eqref{eq:cardSkWk},
  \begin{align*}
    \sum_{x\in  \Z_+^d} \; \sum_{\substack{y\in \Z_+^d\\ y\sim x}}|u(x)-u(y)|^p
    &\leq
    2 \sum_{k=0}^{n-1} \; \sum_{x\in  S_k}  \sum_{\substack{y\in S_{k+1}\\ y\sim x}}|u(x)-u(y)|^p\\
    &=
    2 \sum_{k=0}^{n-1} \; \left(\sum_{x\in  W_k}  \sum_{\substack{y\in S_{k+1}\\ y\sim x}} \frac{1}{n^p}
    + \sum_{x\in  S_k\setminus W_k}  \sum_{\substack{y\in S_{k+1}\\ y\sim x}} \frac{1}{n^p} \right) \\
    &\leq
    \frac{2}{n^p} \sum_{k=0}^{n-1} \; dk^{d-1} + \frac{2}{n^p} \#(S_k\setminus W_k)\cdot d \\
    &\leq
    \frac{2d}{n^p} \sum_{k=1}^{n-1} k^{d-1} +
    \begin{cases}
      \frac{d2^{d+1}}{n^p} \sum_{k=1}^{n-1} k^{d-2} & \textrm{if $d\geq 2$,}\\
     0 & \textrm{if $d=1$.}
     \end{cases}
  \end{align*}
  From here, \eqref{eq:Dvn} follows by the following estimate, valid for $t\geq 0$,
  \[
  \sum_{k=1}^{n-1} k^t \leq \int_0^n k^t \,dk = \frac{n^{t+1}}{t+1}.\qedhere
  \]
\end{proof}

\section{Proofs of Theorems}

\begin{proof}[Proof of Theorem~\ref{thm:fracH}]
  (1)
  Let $sp<d$, $d-sp\geq \delta>0$ and let $K=K(s,p,\delta)\in \N$ satisfy
  \[
  2^{sp+1}(2^{p-1}\vee 1) 2^{-K\delta} \leq 1.
  \]
  Then $K$ satisfies also \eqref{eq:lemgamma}.
    Note that the constant in \eqref{eq:const-lemma} may be bounded by
    \[
    C(d,p,s,K)=\frac{2^{sp+1+Ksp} (2^{p-1}\vee 1)}{1-2^{-d}}
    \leq C(1,p,s,K) = 2^{sp+2+Ksp} (2^{p-1}\vee 1).
    \]
    Therefore, from Lemma~\ref{lem:main}
\begin{align*}
  \sum_{j\in \Z_+^d\setminus\{0\}} \frac{|u(j)|^p}{\|j\|_\infty^{sp}}
  &\leq C(1,p,s,K)
  \sum_{n=1}^\infty \sum_{j\in A_n} \sum_{m\in A_{n+K}} \frac{|u(j)-u(m)|^p}{\|j-m\|_\infty^{d+sp}} \\
  &\leq C(1,p,s,K)
    \sum_{j\in \Z_+^d \setminus\{0\}} \sum_{\substack{m\in \Z_+^d \setminus\{0\}\\ m\neq j}} \frac{|u(j)-u(m)|^p}{\|j-m\|_\infty^{sp+d}},
\end{align*}
for all functions $u:\Z_+^d\to \C$, for which the left side is finite.

(3)
  Let $sp>d$ and let $K\in \N$ satisfy \eqref{eq:lemgamma}. For any function $u:\Z_+^d \to \C$ such that $u(0)=0$,
  it holds, by Lemma~\ref{lem:main},
\begin{align*}
  \sum_{j\in \Z_+^d \setminus\{0\}} \frac{|u(j)|^p}{\|j\|_\infty^{sp}}
    &\leq
    C(d,p,s,K) \sum_{n=1}^\infty \sum_{j\in A_n} \sum_{m\in A_{n+K}}
    \frac{|u(j)-u(m)|^p}{ \|j-m\|_\infty^{sp+d}} \\
   &\quad + 2^{Kd+1} \sum_{\substack{j\in \Z_+^d\\ 1 \leq \|j\|_\infty < 2^{K}}} \frac{|u(j)-u(0)|^p}{\|j - 0\|_\infty^{sp+d}} \\
    &\leq
    C(d,p,s,K) \sum_{j\in \Z_+^d } \sum_{\substack{m\in \Z_+^d\\ m\neq j}}
    \frac{|u(j)-u(m)|^p}{ \|j-m\|_\infty^{sp+d}},
\end{align*}
because $2^{Kd+1} \leq C(d,p,s,K)$.

(2) Let $sp=d$ and $\varepsilon>0$. Put $s'=s+\varepsilon/p$, then $s'p>d$, therefore we may apply
\eqref{eq:thmfrac-sp-large} with $s'$ in place of $s$. This gives us the first inequality in
\eqref{eq:thmfrac-sp-d}, and the second one is trivial.

(4) First we prove \eqref{eq:thmfrac-sp-large} for $\Z^d$ in place of $\Z_+^d$.
For $\varepsilon \in \{-1,1\}^d$, let $\Z_\varepsilon^d = \bigtimes_{i=1}^d \varepsilon_i \Z_+$.
By a~variable substitution $\tilde{j} =(\varepsilon_1 j_1,\ldots,\varepsilon_d j_d)$ and $\tilde{m}$
instead of $j$ and~$m$ in \eqref{eq:thmfrac-sp-large} we obtain  \eqref{eq:thmfrac-sp-large}
for $\Z_\varepsilon^d$ in place of~$\Z_+^d$. Since $\bigcup_\varepsilon \Z_\varepsilon^d = \Z^d$, we obtain,
by summing,
\[
\sum_{j\in \Z^d \setminus\{0\}} \frac{|u(j)|^p}{\|j\|_\infty^{sp}}
    \leq c 2^d
    \sum_{j\in \Z^d } \sum_{\substack{m\in \Z^d\\ m\neq j}} \frac{|u(j)-u(m)|^p}{\|j-m\|_\infty^{sp+d}}
\]
for all functions $u:\Z^d \to \C$ such that $u(0)=0$; factor $2^d$ above comes from the fact
that the sets $\Z_\varepsilon^d$ overlap.

Again, \eqref{eq:thmfrac-sp-d} for $\Z^d$ follows from \eqref{eq:thmfrac-sp-large} for $\Z^d$
applied
to $s$ replaced by $s+\varepsilon/p$.

We could prove \eqref{eq:thmfrac-sp-small} for $\Z^d$ in place of $\Z_+^d$ in the same way
as \eqref{eq:thmfrac-sp-large},
but then the constant would depend on~$d$. Therefore we proceed in another way.
Let $Z_0 = \{0, -1,-2,\ldots\}$ and $Z_1 = \N = \{1,2,\ldots\}$.
For $\eta\in \{0,1\}^d$, we set $Z_\eta = \bigtimes_{i=1}^d Z_{\eta_i}$.
By substituting variables in \eqref{eq:thmfrac-sp-small}, we obtain
\[
\sum_{j\in \Z_\eta \setminus\{\eta\}} \frac{|u(j)|^p}{\|j - \eta \|_\infty^{sp}}
    \leq c
    \sum_{j\in \Z_\eta \setminus\{\eta\}} \sum_{\substack{m\in \Z_\eta \setminus\{\eta\}\\ m\neq j}} \frac{|u(j)-u(m)|^p}{\|j-m\|_\infty^{sp+d}},
    \]
    provided the left side is finite. Note that for $j\in \Z_\eta \setminus\{\eta\}$,
    \[
    \|j - \eta \|_\infty \leq \|j\|_\infty +  \|\eta \|_\infty^{sp} \leq  \|j\|_\infty + 1 \leq 2\|j\|_\infty.
    \]
    Since the sets $Z_\eta$ are pairwise disjoint and $\bigcup_\eta (\Z_\eta \setminus\{\eta\})
    = \Z^d \setminus \{0,1\}^d$, we obtain
    \begin{align*}
      \sum_{j\in \Z^d \setminus \{0,1\}^d} \frac{|u(j)|^p}{\|j \|_\infty^{sp}}
      &\leq
      2^{sp}
      \sum_\eta
\sum_{j\in \Z_\eta \setminus\{\eta\}} \frac{|u(j)|^p}{\|j - \eta \|_\infty^{sp}}
&\leq
c2^{sp}
    \sum_{j\in \Z^d \setminus\{0,1\}^d} \sum_{\substack{m\in \Z^ \setminus\{0,1\}^d\\ m\neq j}} \frac{|u(j)-u(m)|^p}{\|j-m\|_\infty^{sp+d}}.
    \end{align*}
    By adding \eqref{eq:thmfrac-sp-small} to both sides of the above inequality, we obtain
    \[
    \sum_{j\in \Z^d \setminus\{0\}} \frac{|u(j)|^p}{\|j\|_\infty^{sp}}
    \leq c(2^{sp}+1)
    \sum_{j\in \Z^d \setminus\{0\}} \sum_{\substack{m\in \Z^d \setminus\{0\}\\ m\neq j}} \frac{|u(j)-u(m)|^p}{\|j-m\|_\infty^{sp+d}}. \qedhere
    \]
\end{proof}

\begin{lemma}\label{lem:trivial}
Let $N\in \N$ and $B_N = \{x\in \Z_+^d : \|x\|_\infty \leq N\}$.
  There exists a~constant $c=c(p,d,N)$ such that
  \[
  \sum_{j\in B_N\setminus \{0\}} |u(j)|^p
  \leq c
  \sum_{j\in B_N}
  \sum_{\substack{k\in B_N \\ k\sim j}}
  |u(j)-u(k)|^p,
  \]
  whenever $u:\Z_+^d\to \C$ with $u(0)=0$.
\end{lemma}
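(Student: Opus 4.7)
The plan is to run the same path-telescoping argument that drives Proposition~\ref{prop:frac-class}, but now restricted to the finite box $B_N$ and anchored at $0$. For each $j \in B_N \setminus \{0\}$, choose a lattice path $(x_0, x_1, \ldots, x_L)$ inside $B_N$ with $x_0 = 0$, $x_L = j$, $|x_{t+1} - x_t| = 1$, constructed by moving along the first coordinate axis, then the second, and so on, exactly as in \eqref{eq:path}. Since the coordinates grow monotonically from $0$ to $j$, every $x_t$ stays in $B_N$, and the length satisfies $L = \|j\|_1 \leq Nd$.

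Because $u(0) = 0$, the telescoping identity $u(j) = \sum_{t=0}^{L-1} (u(x_{t+1}) - u(x_t))$ combined with the elementary inequality $|\sum_{t=0}^{L-1} a_t|^p \leq (L^{p-1} \vee 1)\sum_{t=0}^{L-1} |a_t|^p$ (Jensen for $p \geq 1$, subadditivity of $r \mapsto r^p$ for $0 < p \leq 1$) gives
\[
|u(j)|^p \leq ((Nd)^{p-1} \vee 1) \sum_{t=0}^{L-1} |u(x_{t+1}) - u(x_t)|^p.
\]

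Summing this over $j \in B_N \setminus \{0\}$ and interchanging sums, we bound the number of paths passing through any fixed oriented edge $(x, y)$ with $x, y \in B_N$, $x \sim y$, by the trivial estimate $\#(B_N \setminus \{0\}) \leq (N+1)^d$. Every increment $u(x_{t+1}) - u(x_t)$ appearing on the right-hand side is of the form $u(x) - u(y)$ for some neighbouring pair in $B_N$, so
\[
\sum_{j \in B_N \setminus \{0\}} |u(j)|^p
\leq ((Nd)^{p-1} \vee 1)(N+1)^d
\sum_{j \in B_N} \sum_{\substack{k \in B_N \\ k \sim j}} |u(j) - u(k)|^p,
\]
which yields the claim with $c(p, d, N) = ((Nd)^{p-1} \vee 1)(N+1)^d$.

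There is essentially no obstacle: the only mild point is making sure the chosen path remains in $B_N$, which is automatic from the monotone coordinate-by-coordinate construction, and acknowledging that both counting and path-length estimates are wasteful — the constant $c(p, d, N)$ is allowed to blow up with $N$, so any trivial overcount is acceptable.
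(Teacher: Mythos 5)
Your proof is correct, but it follows a different route from the paper's. The paper proves Lemma~\ref{lem:trivial} by induction on the $\ell^1$-shells: it establishes \eqref{eq:lemball} for the truncated sums over $\{\|j\|_1\leq r\}$, starting from $r=1$ (where every point is a neighbour of $0$) and passing from $r$ to $r+1$ by picking for each $j$ with $\|j\|_1=r+1$ a neighbour $k$ with $\|k\|_1=r$ and invoking $|u(j)|^p\leq (2^{p-1}\vee 1)(|u(j)-u(k)|^p+|u(k)|^p)$; the constant is then defined recursively and the induction stops at $r=Nd$. You instead reuse the explicit path-telescoping machinery of Proposition~\ref{prop:frac-class}: connect $0$ to $j$ by the monotone coordinate-by-coordinate path of \eqref{eq:path}, telescope using $u(0)=0$, apply the power inequality with $L\leq Nd$, and absorb the edge multiplicity by the crude bound $\#(B_N\setminus\{0\})\leq (N+1)^d$ (each monotone path traverses a given edge at most once, and the path stays in $B_N$ because its coordinates increase from $0$ to those of $j$). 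The two arguments encode the same idea --- a chain of neighbours from the origin to $j$ --- but yours unrolls the recursion explicitly and yields the closed-form constant $((Nd)^{p-1}\vee 1)(N+1)^d$, whereas the paper's inductive constant $(1+(2^{p-1}\vee 1))^{Nd}$-type bound is worse but requires no path bookkeeping. Since the lemma only asks for some finite $c(p,d,N)$, either is perfectly adequate; your version has the small aesthetic advantage of unifying the proof technique with that of Proposition~\ref{prop:frac-class}.
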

\begin{proof}
  We prove by induction that for $r=1,2,\ldots,Nd$, there exists a~constant $c(p,d,N,r)$ such that
  \begin{equation}\label{eq:lemball}
  \sum_{\substack{j\in B_N\setminus \{0\} \\ \|j\|_1 \leq r}} |u(j)|^p
  \leq c(p,d,N,r)
  \sum_{\substack{j\in B_N \\ \|j\|_1 \leq r}}
  \sum_{\substack{k\in B_N \\ k\sim j \\ \|k\|_1 \leq r}}
  |u(j)-u(k)|^p,
  \end{equation}
  when $u(0)=0$.

  1. When $r=1$ and $\|j\|_1 = r = 1$, then $j\sim 0$, hence
  \[
  \sum_{\substack{j\in B_N\setminus \{0\} \\ \|j\|_1 \leq r}} |u(j)|^p
  = \sum_{\substack{j\in B_N \setminus \{0\} \\ \|j\|_1 \leq r}} |u(j) - u(0)|^p =
  \frac{1}{2}
  \sum_{\substack{j\in B_N \\ \|j\|_1 \leq r}}
  \sum_{\substack{k\in B_N \\ k\sim j \\ \|k\|_1 \leq r}}
  |u(j)-u(k)|^p,
  \]
  so $c(p,d,N,1) = 1/2$.

  2. Suppose that \eqref{eq:lemball} holds for some $r$.
  We have,
  \begin{equation}\label{eq:lemball2}
    \sum_{\substack{j\in B_N\setminus \{0\} \\ \|j\|_1 \leq r + 1}} |u(j)|^p
    =
    \sum_{\substack{j\in B_N\setminus \{0\} \\ \|j\|_1 \leq r}} |u(j)|^p
    +
    \sum_{\substack{j\in B_N \\ \|j\|_1 = r + 1}} |u(j)|^p.
    \end{equation}
  Note that if $j\in B_N$ and $\|j\|_1 = r + 1$, then there exists a~neighbour $k\sim j$
  with $k\in B_N$ and $\|k\|_1=r$.
  Therefore the second term in the above sum can be estimated as follows,
  \[
  \sum_{\substack{j\in B_N \\ \|j\|_1 = r + 1}} |u(j)|^p
  \leq
  (2^{p-1}\vee 1) \bigg(
   \sum_{\substack{j\in B_N \\ \|j\|_1 = r+1}}
  \sum_{\substack{k\in B_N \\ k\sim j \\ \|k\|_1 = r}}
  |u(j)-u(k)|^p + 
  \sum_{\substack{k\in B_N \\ \|k\|_1 = r}} |u(k)|^p \bigg)
  \]
  Coming back to \eqref{eq:lemball2},
  \begin{align*}
    \sum_{\substack{j\in B_N\setminus \{0\} \\ \|j\|_1 \leq r + 1}} |u(j)|^p
    &\leq
    (1+(2^{p-1}\vee 1)) \sum_{\substack{j\in B_N\setminus \{0\} \\ \|j\|_1 \leq r}} |u(j)|^p \\
    &\quad + (2^{p-1}\vee 1) 
   \sum_{\substack{j\in B_N \\ \|j\|_1 = r+1}}
  \sum_{\substack{k\in B_N \\ k\sim j \\ \|k\|_1 = r}}
  |u(j)-u(k)|^p \\
  &\leq
  c(p,d,N,r+1) 
   \sum_{\substack{j\in B_N \\ \|j\|_1 \leq r+1}}
  \sum_{\substack{k\in B_N \\ k\sim j \\ \|k\|_1 \leq r+1}}
  |u(j)-u(k)|^p,
  \end{align*}
  where
  \[
  c(p,d,N,r+1) =  (1+(2^{p-1}\vee 1))c(p,d,N,r) \vee 2^{p-1}\vee 1.
  \]
  Since $\|j\|_1 \leq Nd$ for $j\in B_N$, therefore the lemma follows from \eqref{eq:lemball}
  with $r=Nd$.
\end{proof}

\begin{proof}[Proof of Theorem~\ref{thm:classH}]
  First we prove (1) and (2).
  Suppose that $0<p\leq 1 < d$ or $1\leq p < d$ and $d-p\geq \delta>0$.
  In the former case, we put $\delta = 1$ and then $d-1 \geq 1 = \delta$.
  Let $K=K(s,p,\delta)\in \N$ satisfy
  \[
  2^{sp+1}(2^{p-1}\vee 1) 2^{-K\delta} \leq 1.
  \]
  Take $s=\frac{1}{p} \vee 1$, then $sp = p\vee 1 < d$ and $d-sp \geq \delta$, therefore
  $K$ satisfies \eqref{eq:lemgamma}. Hence from inequality \eqref{eq:lemH-small-sp}
  in Lemma~\ref{lem:main},
  \begin{equation}\label{eq:proofappllemma}
  \sum_{j\in \Z_+^d\setminus\{0\}} \frac{|u(j)|^p}{\|j\|_\infty^{sp}} \leq C(d,p,s,K)
  \sum_{n=1}^\infty \sum_{j\in A_n} \sum_{m\in A_{n+K}} \frac{|u(j)-u(m)|^p}{2^{(n+K)(d+sp)}}
  \end{equation}
for all functions $u:\Z_+^d\to\C$ such that the left side above is finite.
However,
\begin{equation}\label{eq:proofconstestimate}
C(d,p,s,K) =\frac{2^{sp+1+Ksp} (2^{p-1}\vee 1)}{1-2^{-d}} \leq 2C(1,p,s,K) =: \tilde{C}(p,s,\delta).
\end{equation}
Right side of \eqref{eq:proofappllemma} may be estimated using Proposition~\ref{prop:frac-class},
\[
\sum_{n=1}^\infty \sum_{j\in A_n} \sum_{m\in A_{n+K}} \frac{|u(j)-u(m)|^p}{2^{(n+K)(d+sp)}}
  \leq 
  C_L(K,s,p)\;   d^{(p\vee 1)-2} \!\!
  \sum_{x \in \Z_+^d\setminus\{0\}}
  \sum_{\substack{y \in \Z_+^d\setminus\{0\} \\ y\sim x}}  \frac{|u(x)-u(y)|^p}{\|x\|_\infty^{sp-(p \vee 1)}}\;.
\]
But $sp-(p \vee 1) = 0$, so \eqref{eq:thmlapl-p-small} and \eqref{eq:thmlapl-p-medium}
follow from \eqref{eq:proofappllemma}, \eqref{eq:proofconstestimate} and above estimate.

We proceed to prove (3) and (4).
Suppose that $d<p$ or $d=p$ and $\varepsilon>0$.
Take
\[
s = \begin{cases}
  1,&\textrm{if $d<p$,}\\
  1 + \dfrac{\varepsilon}{p}, &\textrm{if $d=p$.}
\end{cases}
\]
Then $sp>d$. Let $K=K(s,p,d)\in \N$ satisfy \eqref{eq:lemgamma}.
From inequality  \eqref{eq:lemH-large-sp} in Lemma~\ref{lem:main},
\begin{equation}\label{eq:proofspd}
\sum_{j\in \Z_+^d\setminus \{0\}} \frac{|u(j)|^p}{\|j\|_\infty^{sp}}
    \leq
    C(d,p,s,K) \sum_{n=1}^\infty \sum_{j\in A_n} \sum_{m\in A_{n+K}}
    \frac{|u(j)-u(m)|^p}{2^{(n+K)(d+sp)}}
    + 2\!\!\!\!\!\!\!\!\sum_{\substack{j\in \Z_+^d\\ 1 \leq \|j\|_\infty < 2^{K}}} \frac{|u(j)|^p}{\|j\|_\infty^{sp}}
\end{equation}
for all functions $u:\Z_+^d\to\C$. The first term above may be estimated using Proposition~\ref{prop:frac-class},
\[
  \sum_{n=1}^\infty \sum_{j\in A_n} \sum_{m\in A_{n+k}} \frac{|u(j)-u(m)|^p}{2^{(n+K)(d+sp)}}
  \leq 
  C_L(K,s,p)\;   d^{p-2} \!\!
  \sum_{x \in \Z_+^d\setminus\{0\}}
  \sum_{\substack{y \in \Z_+^d\setminus\{0\} \\ y\sim x}}  \frac{|u(x)-u(y)|^p}{\|x\|_\infty^{sp-p}}\;.
\]
In case $d<p$, it holds $sp-p = 0$, while in the case $d=p$, $sp-p = \varepsilon$.

The last term in \eqref{eq:proofspd} may be estimated using Lemma~\ref{lem:trivial},
\begin{align*}
\sum_{\substack{j\in \Z_+^d\\ 1 \leq \|j\|_\infty < 2^{K}}} \frac{|u(j)|^p}{\|j\|_\infty^{sp}}
&\leq c(p,d,K)
  \sum_{\substack{ j\in \Z_+^d \\ \|j\|_\infty < 2^{K}}}
  \sum_{\substack{k\in\Z_+^d \\ \|j\|_\infty < 2^{K} \\ k\sim j}}
  |u(j)-u(k)|^p \\
&\leq c'(p,d,K,\varepsilon)
  \sum_{\substack{ j\in \Z_+^d \\ \|j\|_\infty < 2^{K}}}
  \sum_{\substack{k\in\Z_+^d \\ \|k\|_\infty < 2^{K} \\ k\sim j}}
  \frac{|u(j)-u(k)|^p}{(\|j\|_\infty \vee \|k\|_\infty)^\varepsilon}.
\end{align*}

To prove (\ref{item:deq1_ple1}), we apply \eqref{eq:thmlapl-p-large} with $1+\varepsilon$ in place
of $p$ and $|u|^{\frac{p}{1+\varepsilon}}$ in place of $u$ and obtain,
\[
 \sum_{j\in \Z_+ \setminus\{0\}} \frac{|u(j)|^p}{\|j\|_\infty^{1+\varepsilon}}
    \leq c
    \sum_{j\in \Z_+\setminus\{0\} } \sum_{\substack{k\in \Z_+ \\ k\sim j}}
    ||u(j)|^{\frac{p}{1+\varepsilon}}-|u(k)|^{\frac{p}{1+\varepsilon}}|^{1+\varepsilon},
\]
for all functions $u:\Z_+ \to \C$ with $u(0)=0$. From there, the result follows, since
\[
\left||u(j)|^{\frac{p}{1+\varepsilon}}-|u(k)|^{\frac{p}{1+\varepsilon}}\right|^{1+\varepsilon}
\leq \Big||u(j)|-|u(k)|\Big|^p \leq |u(j)-u(k)|^p
\]
by concavity of $[0,\infty) \ni x\mapsto x^{\frac{p}{1+\varepsilon}}$ and triangle inequality.
  \footnote{Note that we could similarly prove \eqref{eq:thmlapl-p-small} for $p<1$, using
  \eqref{eq:thmlapl-p-small} for $p=1$.}

To prove (\ref{item:optimal}), let $t$, $p>0$. Suppose that for some $C<\infty$ the inequality
\begin{equation}\label{eq:thmlapl-inverse}
  \sum_{j\in \Z_+^d \setminus\{0\}} \frac{|u(j)|^p}{\|j\|_\infty^t}
  \leq C
  \sum_{j\in \Z_+^d \setminus\{0\}} \sum_{\substack{k\in \Z_+^d\setminus\{0\} \\ k\sim j}} |u(j)-u(k)|^p
\end{equation}
holds 
for all functions $u:\Z_+^d \to \C$ for which the left side of \eqref{eq:thmlapl-inverse} is finite,
or which satisfy $u(0)=0$, if $p<d$ or $d\leq p$, respectively.

If $p\leq 1 < d$ and $t\leq 1$ (so $d-t\geq 1$), then taking in \eqref{eq:thmlapl-inverse}
$u=u_n$ from Proposition~\ref{prop:un} we see
that
\[
\frac{d}{d-t} n^{d-t} \leq C(dn^{d-1} + 2^d d n^{d-2}),
\]
for every $n\in \N$, so it must hold $t=1$. Therefore, exponent $1$ at $\|j\|_\infty$
on the left side of \eqref{eq:thmlapl-p-small} is indeed the smallest possible.

Similarly, when $p\leq 1 = d$ and $t=1$, then  the same choice of $u=u_n$ as before gives
$\ln(n+1) \leq 2C$, which is a~contradiction. Hence \eqref{eq:thmlapl-p-le-d-eq-1}
does not hold with $\varepsilon=0$.

If $1<p<d$ and $t<d$, then  taking in \eqref{eq:thmlapl-inverse}
$u=v_n$ from Proposition~\ref{prop:vn} we see that
\[
    c(p,d,t) n^{d-t} \leq  C(n^{d-p} +  2^d n^{d-1-p})
\]
with some $c(p,d,t)>0$ and every $n$.
Therefore, $t\geq p$ and consequently, exponent $p$ at $\|j\|_\infty$
on the left side of \eqref{eq:thmlapl-p-medium} is  the smallest possible.

We proceed similarly in \eqref{eq:thmlapl-p-large} and \eqref{eq:thmlapl-p-d}, taking
in  \eqref{eq:thmlapl-inverse}
$u=1-v_n$ with $v_n$ as in~Proposition~\ref{prop:vn} to prove optimality of the exponent
$p$ at $\|j\|_\infty$ on the left side of \eqref{eq:thmlapl-p-large}, as well as the fact
that \eqref{eq:thmlapl-p-d} fails to hold for $\varepsilon=0$.
Finally, to prove optimality of the exponent $-1$ at $d^{-1}$ in \eqref{eq:thmlapl-p-small},
suppose  that  \eqref{eq:thmlapl-p-small} holds with $d^{-1-\varepsilon}$ on right side.
For a~function $u$ defined by $u((1,0,\ldots,0))=1$ and $u=0$ otherwise,
the left side of \eqref{eq:thmlapl-p-small} equals $1$, while the right side equals $cd^{-1-\varepsilon}\cdot d=cd^{-\varepsilon}$. Thefefore $\varepsilon\geq 0$.

Finally,  (\ref{item:Z}) may be proved in a~similar manner as~(4) in Theorem~\ref{thm:fracH}, we omit the details.
\end{proof}

\section*{Acknowledgments.}
I would like to thank Luz Roncal, Florian Fischer and Matthias Keller for helpful discussions.

  \enlargethispage*{4 \baselineskip}

\def\cprime{$'$} \def\cprime{$'$}
  \def\polhk#1{\setbox0=\hbox{#1}{\ooalign{\hidewidth
  \lower1.5ex\hbox{`}\hidewidth\crcr\unhbox0}}}
  \def\polhk#1{\setbox0=\hbox{#1}{\ooalign{\hidewidth
  \lower1.5ex\hbox{`}\hidewidth\crcr\unhbox0}}}
  \def\polhk#1{\setbox0=\hbox{#1}{\ooalign{\hidewidth
  \lower1.5ex\hbox{`}\hidewidth\crcr\unhbox0}}} \def\cprime{$'$}
  \def\cprime{$'$} \def\cprime{$'$} \def\cprime{$'$} \def\cprime{$'$}
  \def\cprime{$'$}

\end{document}